\numberwithin{equation}{section}
\theoremstyle{plain}
\newtheorem{thm}{Theorem}[section]
\newtheorem*{thmA}{Theorem A}
\newtheorem*{thmB}{Theorem B}
\newtheorem{lem}[thm]{Lemma}
\newtheorem{prop}[thm]{Proposition}
\newtheorem{cor}[thm]{Corollary}
\newtheorem{de}[thm]{Definition}
\newtheorem{re}[thm]{Remark}
\begin{document}

\title[Frame bound, spectral gap and Plus space]
{Frame bound, spectral gap and Plus space}

\author{Zheng-Yi Lu}

\address{ School of Mathematics, Hunan University, Changsha Hunan 410082, China}

\email{zyluhnsd@163.com}

\keywords{Frame; spectral gap; spectral measure; Plus space; additive space.}
\subjclass[2010]{Primary 28A80; Secondary 42C05.}
\thanks{The research is supported in part by the NNSF of China (Nos. 11831007 and 12071125), the Hunan Provincial NSF (No. 2025JJ60065), the Fundamental
	Research Funds for the Central Universities.}

\begin{abstract}
In this paper, we investigate the relationship between frame bounds and spectral gaps. By introducing the notion of \emph{essential minimum(maximal) spectral gap}, we provide a local characterization of Landau's theorem \cite{Lan67}. As an application, we resolve the spectrality additive measures of Lebesgue type,  conclusively answering an open question on the spectrality of Plus spaces originally raised by Lai, Liu, Prince \cite{LLP21} and further studied by Ai, Lu, Zhou  \cite{ALZ23} and Kolountzakis, Wu \cite{KW25}. 
\end{abstract}

\maketitle
\section{Introduction }
\subsection{Frame and spectral gap}
Let $\mu$ be a Borel probability measure with compact support in $\Bbb R^n$. We call $\mu$ a {\textit{frame-spectral measure}} on $\Bbb R$ if there exist $0<A\leq B<\infty$ and  $E(\Lambda):=\{e^{-2\pi i\lambda \cdot x}:\lambda\in\Lambda\}$ such that 
$$
A\int|f(x)|^2d\mu(x)\leq \sum_{\lambda\in\Lambda}\Big|\int f(x)e^{-2\pi i\lambda\cdot x}d\mu(x)\Big|^2\leq B\int|f(x)|^2d\mu(x)
$$
for any $f(x)\in L^2(\mu)$. 
In this case, the $E(\Lambda)$ is called a \textit{frame} of $\mu$ and $\Lambda$ is called a \textit{frame-spectrum} of $\mu$. 
Furthermore, we say that $E(\Lambda)$ is a \textit{tight frame} of $\mu$ when $A=B$. 

The theory of frames was first introduced by  Duffin and Schaeffer \cite{DS52} in 
1952 to study some fundamental problems in nonharmonic Fourier series. 
A significant advancement was made by Landau  \cite{Lan67}, who investigated the distribution of frame spectra. One of the key results in this area establishes that if \( \mathcal{L}_\Omega \) (the Lebesgue measure restricted to a set \(\Omega\)) admits a frame spectrum $\Lambda$, then the lower Beurling density of $\Lambda$ must satisfy 
\begin{align*}
	D^-(\Lambda)\geq \mathcal{L}(\Omega), 
\end{align*} where the lower Beurling density is defined as 
\[
D^-(\Lambda) = \liminf_{R \to \infty} \inf_{x \in \mathbb{R}^n} \frac{\#(\Lambda \cap B(x, R))}{|B(x, R)|}. 
\]
Here \( B(x, R) \) denotes the ball of radius \( R \) centered at \( x \), \( |B(x, R)| \) is its Lebesgue measure, and \( \#(\Lambda \cap B(x, R)) \) counts the points of \( \Lambda \) in \( B(x, R) \). 
This result reveals a fundamental density constraint: any frame spectrum $\Lambda$ must be sufficiently dense, independent of the frame bounds. For further reading on frame theory and spectral gaps, we refer to \cite{Chr03,Hei11,IP00, KL16, OS02}. 
However, an intuitive question arises: If the lower frame bound $A$ increases, should the gaps in the frame spectrum necessarily decrease? This motivates our central inquiry:
\\
	\textbf{(Q1): How do the frame bounds influence the spectral gaps of $\Lambda$?}

Let $\Lambda = \{\lambda_k\}_{k\in\mathbb{Z}}$ be a frame-spectrum for $\mu$ with $0 = \lambda_0 \in \Lambda$, where the elements are ordered such that $\lambda_n < \lambda_m$ for all $n < m$. 
Denote the spectral gaps as $g_k(\Lambda):=\lambda_k-\lambda_{k-1}$ for $k\in\Bbb Z$. 
To characterize the extremal behavior of these gaps, we define \textit{the essential minimal spectral gap} and \textit{the essential maximal spectral gap}, respectively, as:
\[
g_{\min}(\Lambda) := \inf\{c\geq0:g_k(\Lambda)<c\;\text{for\;infinitely\;many\;}k\}
\]
and 
\[
g_{\max}(\Lambda) := \sup\{c\geq0:g_k(\Lambda)>c\;\text{for\;infinitely\;many\;}k\}.
\]
Through a direct application of Landau's theorem to these gap characteristics, we establish the following fundamental result: 
\begin{thmA}[\cite{Lan67}]\label{thma}
Let $\Omega\subset\Bbb R$ satisfy $|\Omega|=1$. 
If $\mathcal{L}_{\Omega}$ is a frame-spectral measure with a frame-spectrum $\Lambda$, then $g_{\min}(\Lambda)\leq 1$. 
\end{thmA}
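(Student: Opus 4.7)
The plan is to proceed by contradiction, reducing the claim to a direct invocation of the Landau density lower bound recalled just before the statement. Assume $g_{\min}(\Lambda) > 1$ and choose a constant $c$ with $1 < c < g_{\min}(\Lambda)$. By the very definition of the essential minimum gap, only finitely many indices $k$ can satisfy $g_k(\Lambda) < c$, so there exists $K$ with $g_k(\Lambda) \geq c$ for every $|k| > K$.

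The main step is to convert this eventual gap lower bound into an upper bound on the counting function of $\Lambda$ on long intervals. Telescoping $\lambda_k - \lambda_K = \sum_{j=K+1}^{k} g_j(\Lambda)$ for $k > K$ (and symmetrically for $k < -K$) yields $|\lambda_k - \lambda_K| \geq c(|k| - K)$, from which one reads off, for every $R>0$,
\[
\#\bigl(\Lambda \cap (\lambda_K - R,\, \lambda_K + R)\bigr) \;\leq\; \frac{2R}{c} + (2K+1).
\]
Dividing by $2R$, the additive constant disappears in the $\liminf$. Since the infimum over $x$ in the Beurling density is bounded above by the value at any fixed anchor, choosing $x = \lambda_K$ gives
\[
D^{-}(\Lambda) \;\leq\; \liminf_{R\to\infty}\frac{\#\bigl(\Lambda \cap (\lambda_K - R,\, \lambda_K + R)\bigr)}{2R} \;\leq\; \frac{1}{c} \;<\; 1 \;=\; |\Omega|,
\]
in direct contradiction to Landau's theorem $D^{-}(\Lambda) \geq |\Omega|$. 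Hence $g_{\min}(\Lambda) \leq 1$.

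I do not expect any serious obstacle: after the contradiction setup, the argument is a one-line density count. The only delicate point is the bookkeeping around the "infinitely many" qualifier in the definition of $g_{\min}$; it is precisely this qualifier that furnishes the uniform lower bound $g_k \geq c$ outside a finite window of indices, and it is the $\liminf_{R\to\infty}$ in the Beurling density that then absorbs the additive constants coming from that finite window. Everything else is a routine telescoping and an appeal to \emph{Theorem~A}'s hypothesis that $|\Omega|=1$.
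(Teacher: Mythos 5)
Your proof is correct and is precisely the ``direct application of Landau's theorem'' that the paper invokes (it gives no further detail): the contradiction setup, the observation that $g_k(\Lambda)\geq c$ outside finitely many indices, the telescoping count, and the resulting bound $D^-(\Lambda)\leq 1/c<1=|\Omega|$ are exactly the intended argument. No gaps; the handling of the finite exceptional window via the $\liminf$ is done properly.
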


The estimates derived in the above theorem exhibit a remarkable independence from the frame bounds $A$ and $B$. We now bridge this gap by establishing an explicit connection between spectral gaps and frame bounds through Fourier-analytic techniques. For this analysis, we employ the Fourier transform characterization of the measure $\mu$, defined as:
\[
\widehat{\mu}(\xi) = \int e^{-2\pi i \xi \cdot x} \, d\mu(x),
\]
where the integral is taken over the support of $\mu$. 
\begin{thm}\label{thm1.1}
	Let $\mu$ be a Borel probability measure on $\Bbb R$. 
	Suppose that there exists $C>0$ such that \( |\widehat{\mu}(\xi)|\leq C|\xi|^{-1},\;\forall\xi\in\Bbb R\setminus\{0\}. \) 
If a countable set \(\Lambda\subset\Bbb R\) satisfies 
\begin{align}\label{(1.1')}
	\sum_{\lambda\in\Lambda}\Big|\int f(x)e^{-2\pi i\lambda x}d\mu(x)\Big|^2\geq A
	\int |f(x)|^2d\mu(x),\;\;\forall\;f(x)\in L^2(\mu),
\end{align}
then 
	\[
	g_{\min}(\Lambda) g_{\max}(\Lambda)\leq \frac{C^2\pi^2}{A}.
	\] 
\end{thm}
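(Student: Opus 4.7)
The plan is to feed single-exponential test functions into \eqref{(1.1')}, localize at midpoints of long gaps, and convert the frame estimate into a Hilbert-type sum that directly encodes the product $g_{\min}g_{\max}$.

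First, I would set $f_{\alpha}(x)=e^{2\pi i\alpha x}$. Since $\mu$ is a probability measure, $\|f_{\alpha}\|_{L^{2}(\mu)}^{2}=1$ and $\int f_{\alpha}(x)e^{-2\pi i\lambda x}\,d\mu(x)=\widehat{\mu}(\lambda-\alpha)$, so \eqref{(1.1')} becomes the pointwise-in-$\alpha$ inequality
\[
\sum_{\lambda\in\Lambda}|\widehat{\mu}(\lambda-\alpha)|^{2}\ge A,\qquad \alpha\in\mathbb{R},
\]
which the decay hypothesis $|\widehat{\mu}(\xi)|\le C/|\xi|$ upgrades to $\sum_{\lambda\in\Lambda}C^{2}/(\lambda-\alpha)^{2}\ge A$ whenever $\alpha\notin\Lambda$. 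Next I would localize: using the definition of $g_{\max}$, pick $k_{j}\to\infty$ with $g_{k_{j}}\to g_{\max}$; using the definition of $g_{\min}$ (which makes $\{k:g_{k}<g_{\min}-\epsilon\}$ finite), by taking $j$ large enough every gap $g_{l}$ with $l\in[k_{j}-N,k_{j}+N]\setminus\{k_{j}\}$ satisfies $g_{l}\ge g_{\min}-\epsilon$, for any prescribed $N$ and $\epsilon>0$. With $\alpha_{j}=(\lambda_{k_{j}-1}+\lambda_{k_{j}})/2$, the two nearest points of $\Lambda$ sit symmetrically at distance $g_{k_{j}}/2$, and telescoping the intermediate gaps places the $n$-th point beyond them on either side at distance at least $g_{k_{j}}/2+n(g_{\min}-\epsilon)$. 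The Fourier decay bound then yields
\[
A\le 2C^{2}\sum_{n=0}^{\infty}\frac{1}{\bigl(g_{k_{j}}/2+n(g_{\min}-\epsilon)\bigr)^{2}}+o_{j}(1),
\]
the $o_{j}(1)$ absorbing the bounded set of exceptional gaps that sit arbitrarily far from $\alpha_{j}$ as $j\to\infty$.

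The central analytic input will be the sharp discrete inequality
\[
\sum_{n=0}^{\infty}\frac{1}{(a+nb)^{2}}\le\frac{\pi^{2}}{4ab}\qquad\text{whenever }a\ge b/2>0,
\]
with equality at $a=b/2$; equivalently, $\psi'(t)\le\pi^{2}/(4t)$ for $t\ge 1/2$. Applied with $a=g_{k_{j}}/2$ and $b=g_{\min}-\epsilon$ this collapses the previous estimate to $A\le C^{2}\pi^{2}/(g_{k_{j}}(g_{\min}-\epsilon))+o_{j}(1)$, and letting first $j\to\infty$ and then $\epsilon\to 0^{+}$ rearranges to $g_{\min}(\Lambda)g_{\max}(\Lambda)\le C^{2}\pi^{2}/A$. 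The equality case $t=1/2$ corresponds to the Parseval/lattice setting $g_{\min}=g_{\max}$, so the constant $\pi^{2}$ is sharp.

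The main obstacle will be proving the trigamma inequality itself. The two most obvious approaches each come up short: the integral test $\psi'(t)\le 1/t^{2}+1/t$ yields only the weaker product bound $g_{\min}g_{\max}\le 12C^{2}/A$, while the reflection identity $\psi'(t)+\psi'(1-t)=\pi^{2}/\sin^{2}(\pi t)$ combined with $\psi'(1-t)\ge \pi^{2}/2$ gives a bound that agrees with $\pi^{2}/(4t)$ at $t=1/2$ but blows up near $t=1$. My route is to prove monotonicity of $G(t):=t\,\psi'(t)$ on $[1/2,\infty)$, starting from $G(1/2)=\pi^{2}/4$; since
\[
G'(t)=\sum_{n=0}^{\infty}\frac{n-t}{(t+n)^{3}}=-\frac{1}{t^{2}}+\sum_{n=1}^{\infty}\frac{n-t}{(t+n)^{3}}
\]
and its continuous analogue $\int_{0}^{\infty}(x-t)/(t+x)^{3}\,dx$ vanishes exactly, the required negativity of $G'$ should follow from a careful discrete-vs-integral comparison around the sign change at $n=t$, with the $n=0$ endpoint contribution $-1/t^{2}$ tipping the balance.
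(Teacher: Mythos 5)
Your proposal reproduces the paper's argument almost step for step: the same exponential test functions turn \eqref{(1.1')} into the pointwise bound $\sum_{\lambda\in\Lambda}|\widehat{\mu}(\lambda-\alpha)|^2\ge A$; the same choice of $\alpha_j$ at the midpoint of a near-maximal gap, together with telescoping the remaining gaps from below by $g_{\min}-\epsilon$ and discarding the finitely many exceptional gaps as a $o_j(1)$ term, gives the same series estimate; and your ``sharp discrete inequality'' $\sum_{n\ge0}(a+nb)^{-2}\le \pi^2/(4ab)$ for $a\ge b/2$ is, after substituting $x=a/b$, precisely the paper's claim that $F(x)=x\,\zeta(2,x)\le F(\tfrac12)=\pi^2/4$ for $x\ge\tfrac12$. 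So the only genuine divergence is how that claim is proved, and that is also where your write-up has its one real gap. The paper splits at $x=0.65$, using an integral test on $[0.65,\infty)$ and a Taylor expansion of $F$ about $\tfrac12$ on $[\tfrac12,0.65]$; you instead assert that $G(t)=t\,\psi'(t)$ is decreasing on $[\tfrac12,\infty)$ and leave the negativity of $G'(t)=\sum_{n\ge0}(n-t)/(t+n)^{3}$ to an unspecified ``careful discrete-vs-integral comparison.'' The monotonicity is true (in fact $G'<0$ on all of $(0,\infty)$), and it is arguably a cleaner route than the paper's, but the comparison you gesture at is not automatic: the natural bound for the unimodal tail, namely $\sum_{n\ge1}(n-t)/(t+n)^{3}\le \int_{1}^{\infty}(x-t)(t+x)^{-3}\,dx+\max_{x}(x-t)(t+x)^{-3}=(t+1)^{-2}+\tfrac{1}{27}t^{-2}$, beats the endpoint term $t^{-2}$ only for $t\lesssim 52$, so as stated the sketch does not close for all $t$.

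Two ways to finish. Either restrict the monotonicity argument to $[\tfrac12,0.65]$ (where the naive comparison does suffice) and handle $t\ge 0.65$ by the paper's integral test $G(t)\le 1+t^{-1}-(t+1)^{-2}<\pi^2/4$; or prove $G'<0$ globally from the representation $\psi'(t)=\int_0^\infty s e^{-ts}(1-e^{-s})^{-1}\,ds$: integrating $t\psi''(t)$ by parts in $s$ gives
\begin{equation*}
G'(t)=\int_0^\infty e^{-ts}\,\frac{s\bigl((s+1)e^{-s}-1\bigr)}{(1-e^{-s})^{2}}\,ds<0,
\end{equation*}
since $e^{s}>1+s$ for $s>0$. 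With either completion supplied, your proof is correct and coincides with the paper's.
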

\begin{re}\label{thm1.2}
	Noting $g_{\min}(\Lambda)\leq g_{\max}(\Lambda)$, by the above theorem, then $	g_{\min}^2(\Lambda)\leq\frac{C^2\pi^2}{A}$. 
	If we modify the details of the proof, we can improve this estimate to $$	g_{\min}^2(\Lambda)\leq	\frac{C^2\pi^2}{3(A-1)}$$ when \( A>\frac32\).
\end{re}

Consider a family of sets 
 \(\{\Lambda_A\}\) that satisfy the frame condition \eqref{(1.1')}. 
The preceding remark establishes that the essential minimal spectral gap  \(g_{\min}(\Lambda_A)\) approaches $0$ as 
$A$ tends to infinity.
 A natural question then arises: What is the decay rate of \(g_{\min}(\Lambda_A)\) as \(A\) tends infinity? We investigate this question in the following theorem.

\begin{thm}\label{thm1.3}
	Let Borel probability measure $\mu=g(x)dx$ with $|\widehat{g}(\xi)|=\mathcal{O}(|\xi|^{-\alpha})$ for some $\alpha>\frac12$. 
	Suppose that there exists a family  $\{\Lambda_A:A>0\}$ such that 
	\begin{align*}
		\sum_{\lambda\in\Lambda_A}\Big|\int f(x)e^{-2\pi i\lambda x}d\mu(x)\Big|^2\geq A\int |f(x)|^2d\mu(x),\;\;\forall\;f(x)\in L^2(\mu).
	\end{align*}
Then 
	\[	 \varlimsup_{A\rightarrow\infty}A g_{\min}(\Lambda_A)\leq	\int |g(x)|^2dx. \] 
\end{thm}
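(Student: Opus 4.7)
The starting point is to probe the lower frame bound with the plane-wave test family $f_t(x)=e^{-2\pi i t x}$, $t\in\Bbb R$. Since $\mu$ is a probability measure, $\|f_t\|_{L^2(\mu)}^2=1$, and $\int f_t(x)e^{-2\pi i\lambda x}d\mu(x)=\widehat g(t+\lambda)$. The hypothesis therefore collapses to the pointwise inequality
$$\sum_{\lambda\in\Lambda_A}|\widehat g(t+\lambda)|^2\;\geq\;A\qquad(\forall\,t\in\Bbb R).$$

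I would then average this inequality in $t$ over a long interval $[0,T]$ and use Fubini to switch sum and integral, which after the substitution $u=t+\lambda$ yields
$$AT\;\leq\;\int_{\Bbb R}N_T(u)|\widehat g(u)|^2\,du,\qquad N_T(u):=\#\bigl(\Lambda_A\cap[u-T,u]\bigr).$$
The assumption $|\widehat g(\xi)|=\mathcal O(|\xi|^{-\alpha})$ with $\alpha>\tfrac12$ ensures $\widehat g\in L^2(\Bbb R)$ (since $|\widehat g|\leq\min(1,C|\xi|^{-\alpha})$ and $2\alpha>1$ gives integrability at infinity), so Plancherel yields $\|\widehat g\|_2^2=\int|g|^2dx$.

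The essential minimal gap enters through a sharp counting estimate on $N_T(u)$. Set $\delta_A:=g_{\min}(\Lambda_A)$ and fix $\epsilon\in(0,\delta_A)$; by definition of $g_{\min}$, the exceptional set $K(\epsilon):=\{k:g_k(\Lambda_A)<\delta_A-\epsilon\}$ is \emph{finite}. Since the elements of $\Lambda_A$ lying in any length-$T$ window form a consecutive block and at most $|K(\epsilon)|$ of the inter-point gaps inside that block can fall below $\delta_A-\epsilon$, an elementary count gives the uniform bound
$$\sup_{u\in\Bbb R}N_T(u)\;\leq\;\frac{T}{\delta_A-\epsilon}+|K(\epsilon)|+1.$$
Plugging this and Plancherel into the averaged frame inequality, dividing by $T$, and letting $T\to\infty$ eliminates the additive correction, giving $A\leq\int|g|^2dx/(\delta_A-\epsilon)$. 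Finally $\epsilon\to 0$ produces $A\,\delta_A\leq\int|g|^2dx$, from which the asserted $\varlimsup$ bound is immediate.

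The one subtle point—and what I expect to be the main obstacle—is the interplay between the $\epsilon$ and $T$ limits: $|K(\epsilon)|$ does depend on $\epsilon$ and may well blow up as $\epsilon\to 0$, but it crucially does \emph{not} depend on $T$. This is precisely why the order $T\to\infty$ then $\epsilon\to 0$ works: the essential-minimal-gap definition confines the anomalous small gaps to a finite set whose contribution becomes negligible in the $T\to\infty$ average, so that only the asymptotic density $1/\delta_A$ controls the final estimate.
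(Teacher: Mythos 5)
Your proof is correct, and it takes a genuinely different route from the paper's. The paper also starts by testing the lower frame inequality against exponentials, but then it fixes a \emph{single} large frequency $\xi$, splits $\Lambda_A$ into a $d_\epsilon$-separated part plus a finite exceptional part, shows separately (by contradiction) that $g_{\min}(\Lambda_A)\to 0$ as $A\to\infty$, and finally recognizes $d_\epsilon\sum_{\lambda}|\widehat{\mu}(\xi-\lambda)|^2$ as a Riemann sum of $|\widehat{\mu}|^2$ with mesh $d_\epsilon\to 0$, which converges to $\int|\widehat{\mu}|^2$ and then to $\int|g|^2$ by Plancherel. You instead \emph{average} the pointwise inequality $\sum_{\lambda}|\widehat g(t+\lambda)|^2\ge A$ over $t\in[0,T]$, convert it by Tonelli into an exact integral against the counting function $N_T(u)$, and control $N_T$ by the gap structure; the two limits $T\to\infty$ and $\epsilon\to 0$ (in that order, as you correctly emphasize, since $|K(\epsilon)|$ is independent of $T$) then do the work. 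This buys you two things the paper's argument does not give directly: you avoid both the auxiliary claim that $g_{\min}(\Lambda_A)\to 0$ and the somewhat delicate improper-Riemann-sum limit, and you actually obtain the stronger non-asymptotic inequality $A\,g_{\min}(\Lambda_A)\le\int|g|^2dx$ for \emph{each} fixed $A$ (which is consistent with, and sharp for, the construction in Proposition~\ref{thm2.1}), from which the stated $\varlimsup$ bound follows trivially. The only points worth writing out in full are the justification that $\widehat g\in L^2$ (you have it: $|\widehat g|\le\min(1,C|\xi|^{-\alpha})$ with $2\alpha>1$) and the standard Plancherel identity for $g\in L^1$ with $\widehat g\in L^2$; neither is a gap.
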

By specializing to the classical Lebesgue measure restricted to the unit interval $[0,1]$, we immediately derive the following corollary.  
\begin{cor}\label{thm1.4}
	Let $\mu$ be the Lebesgue measure restricted to $[0,1]$. 
		Suppose that there exists a family  $\{\Lambda_A:A>0\}$ such that 
	\begin{align*}
		\sum_{\lambda\in\Lambda_A}\Big|\int_{[0,1]} f(x)e^{-2\pi i\lambda x}dx\Big|^2\geq A\int_{[0,1]} |f(x)|^2dx,\;\;\forall\;f(x)\in L^2([0,1]).
	\end{align*}
	Then 
	\[	 \varlimsup_{A\rightarrow\infty}A g_{\min}(\Lambda_A)\leq	1. \] 
\end{cor}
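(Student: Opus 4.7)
The plan is to deduce this corollary as a direct specialization of Theorem \ref{thm1.3} to the density $g(x) = \chi_{[0,1]}(x)$. All the hard analytic work is already packaged in Theorem \ref{thm1.3}; what remains is to verify its two hypotheses for this choice of $g$ and to compute the right-hand side of the conclusion.

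First I would identify the Lebesgue measure restricted to $[0,1]$ with the measure $g(x)\,dx$ where $g = \chi_{[0,1]}$, so that $\int|g(x)|^2\,dx = 1$. The frame-type inequality in the hypothesis of the corollary is then exactly the inequality appearing in Theorem \ref{thm1.3}, with the same family $\{\Lambda_A\}$.

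Next I would verify the Fourier decay hypothesis $|\widehat{g}(\xi)| = \mathcal{O}(|\xi|^{-\alpha})$ for some $\alpha > \tfrac12$. A direct computation gives
\[
\widehat{g}(\xi) = \int_0^1 e^{-2\pi i \xi x}\,dx = \frac{1 - e^{-2\pi i \xi}}{2\pi i \xi}, \qquad \xi \neq 0,
\]
so that $|\widehat{g}(\xi)| \leq \frac{1}{\pi|\xi|}$. Hence the decay hypothesis holds with $\alpha = 1 > \tfrac12$.

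With both hypotheses verified, Theorem \ref{thm1.3} applies and yields
\[
\varlimsup_{A\to\infty} A\, g_{\min}(\Lambda_A) \leq \int |g(x)|^2\,dx = 1,
\]
which is the desired conclusion. There is no real obstacle here: the corollary is a clean substitution into Theorem \ref{thm1.3}, and the only substantive check is the $|\xi|^{-1}$ decay of $\widehat{\chi_{[0,1]}}$, which is immediate.
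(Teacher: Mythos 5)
Your proposal is correct and is essentially the paper's own argument: the paper derives this corollary by "immediately" specializing Theorem \ref{thm1.3} to $g=\chi_{[0,1]}$, for which $\int|g|^{2}\,dx=1$, and your verification that $|\widehat{\chi_{[0,1]}}(\xi)|\leq\frac{1}{\pi|\xi|}$ (so $\alpha=1>\frac12$) is exactly the check the paper leaves implicit.
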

This estimate is in fact sharp, as demonstrated by the existence of a family $\{\Lambda_A\}_{A>0}$ achieving the limiting behavior:
\[
\lim_{A\to\infty} A\cdot g_{\min}(\Lambda_A) = 1
\]
(See Proposition~\ref{thm2.1} for the detailed construction).

For the classical Lebesgue measure, Landau's fundamental result \cite{Lan67} established that any frame spectrum must satisfy certain density requirements. This leads to two natural questions regarding spectral sparsity:
\begin{enumerate}
	\item How does the frame bounds $A,B$ influence the maximal gap size $g_{\max}(\Lambda)$?
	\item What is the precise quantitative relationship between these parameters?
\end{enumerate}
We now establish this connection through the following theorem. Together with Theorem \ref{thm1.1}, these results will enable us to completely resolve the spectrality problem for the additive measures of Lebesgue type in the subsequent subsection.

\begin{thm}\label{thm1.5}
Suppose that there exist $0<A\leq B<\infty$ such that 
	\[
 A\int_{[0,1]} |f(x)|^2dx\leq\sum_{\lambda\in\Lambda}\Big|\int_{[0,1]} f(x)e^{-2\pi i\lambda x}dx\Big|\leq B\int_{[0,1]} |f(x)|^2dx,\;\;\forall\;f(x)\in L^2([0.1]). 
	\]
Then \[
\frac{4}{\pi^2 B}\leq g_{\max}(\Lambda)\leq \sup\{g_k(\Lambda):k\in\Bbb Z\}\leq\frac{\pi^2B}{A}+2.
\]
\end{thm}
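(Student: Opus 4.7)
The plan is to prove the two nontrivial inequalities separately by applying the frame bounds to explicit test functions in $L^2[0,1]$; the middle inequality $g_{\max}(\Lambda) \le \sup_k g_k(\Lambda)$ follows immediately from the definition of the essential maximal spectral gap.

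For the lower bound $4/(\pi^2 B) \le g_{\max}(\Lambda)$, I would first extract a uniform local density estimate from the upper frame bound. For each $n \in \mathbb{Z}$ take the unit-norm test function $f_n(x) = e^{2\pi i (n+1/2) x}$; a direct computation gives $|\widehat{f_n}(\lambda)|^2 = \sin^2(\pi(\lambda-n-\frac{1}{2}))/(\pi(\lambda-n-\frac{1}{2}))^2$, which is at least $4/\pi^2$ on $[n,n+1]$ since $|\lambda-n-\frac{1}{2}|\le\frac{1}{2}$ there. The Bessel inequality therefore yields
\[
\frac{4}{\pi^2}\,\#(\Lambda\cap[n,n+1]) \le \sum_{\lambda\in\Lambda}|\widehat{f_n}(\lambda)|^2 \le B,
\]
i.e.\ $\#(\Lambda\cap[n,n+1]) \le \pi^2 B/4$ for every $n$. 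If $g_{\max}(\Lambda) < 4/(\pi^2 B)$, pick $c\in(g_{\max}(\Lambda),\,4/(\pi^2 B))$; then $g_k(\Lambda)\le c$ for all but finitely many $k$, so every sufficiently distant unit interval contains more than $1/c > \pi^2 B/4$ points of $\Lambda$, contradicting the density bound.

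For the upper bound $\sup_k g_k(\Lambda) \le \pi^2 B/A + 2$, fix $k$ and set $L = \lambda_{k+1}-\lambda_k$. Multiplication by $e^{-2\pi i \xi_0 x}$ is a unitary operator on $L^2[0,1]$ that shifts the frame spectrum $\Lambda$ by $-\xi_0$ while preserving both frame bounds, so choosing $\xi_0 = (\lambda_k+\lambda_{k+1})/2$ I may assume $\Lambda\cap(-L/2,L/2) = \emptyset$. Now apply the lower frame bound to $f = \chi_{[0,1]}$: since $\|f\|^2 = 1$ and $|\widehat{f}(\lambda)|^2 = \sin^2(\pi\lambda)/(\pi\lambda)^2 \le 1/(\pi\lambda)^2$, we obtain
\[
A \le \sum_{\lambda\in\Lambda} |\widehat{f}(\lambda)|^2 \le \frac{1}{\pi^2} \sum_{|\lambda|\ge L/2} \frac{1}{\lambda^2}.
\]
I would then partition $\Lambda\cap\{|\lambda|\ge L/2\}$ into unit intervals $[n,n+1]$, apply the density bound of the previous part, and use the elementary tail estimate $\sum_{n\ge N}1/n^2 \le 1/(N-1)$ with $N = \lceil L/2\rceil$; symmetrizing between positive and negative $\lambda$ and rearranging gives the stated bound $L \le \pi^2 B/A + 2$.

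The main technical obstacle is the careful bookkeeping of constants in the final tail-sum estimate so that they combine to give precisely $\pi^2 B/A + 2$; the conceptual content is however quite minimal, consisting of two very simple test functions (a pure exponential and the indicator of $[0,1]$) together with the local density bound extracted from the Bessel inequality.
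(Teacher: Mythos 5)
Your proposal is correct and follows essentially the same route as the paper: both arguments hinge on the local density bound $\#(\Lambda\cap[\xi-\tfrac12,\xi+\tfrac12])\le\frac{\pi^2}{4}B$ obtained from the upper frame inequality with a pure exponential (the paper's Lemma~\ref{thm3.1}), then test the lower frame bound at the midpoint of a large gap and sum $|\widehat{\mathcal{L}_{[0,1]}}|^2$ over unit blocks, and finally deduce the lower bound on $g_{\max}(\Lambda)$ from the same density estimate. The only differences are cosmetic (an elementary tail estimate in place of the Hurwitz zeta function, and a contrapositive rather than a direct pigeonhole for the lower bound), and the constants have enough slack for your bookkeeping to close.
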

\subsection{Spectral measure and Plus space}
We say that $\mu$ is a {\textit{spectral measure}} if $L^2(\mu)$ admits a Fourier orthogonal basis $E(\Lambda)$, in which case $\Lambda$ is called a spectrum of $\mu$. 
The study of spectral measures originated with Fuglede?s work \cite{Fug74}, which introduced the famous spectral set conjecture: a set $\Omega \subset \mathbb{R}^n$ is spectral if and only if it is a translational tile. While this conjecture has been disproven for dimensions $n \geq 3$ by Tao and others \cite{KM06, Ma05, Tao}, it remains open for $n = 1$ and $2$. To this day, it continues to be an active research topic \cite{FFLS19,IKT03, La01, LM22}. Recently, the spectrality of singular measures has also attracted increasing attention.
A major breakthrough came in 1998, when Jorgensen and Pedersen \cite{JP98} constructed the first singular, non-atomic spectral measure-the $1/4$-Cantor measure. Their discovery sparked significant interest in the spectral theory of fractal measures. For further details and recent progress, we refer to \cite{Dai16, Dai12, DHL14}, among others.

In 2018, Lev \cite{Lev18} studied the addition of two
measures supported respectively on two orthogonal subspaces embedded in the ambient
space $\mathbb R^n$  and showed that these measures admit Fourier frames. Recently, Lai, Liu and Prince \cite{LLP21} studied the spectrality of additive measures. 
\begin{de}
	Let $\mu$ and $\nu$ be two continuous Borel probability measures on $\mathbb{R}$. We embed them into the $x$ and $y$ axes in $\mathbb{R}^{2}$ respectively. The \emph{additive space over $\mu$ and $\nu$} is the space $L^{2}(\rho)$, where $\rho$ is the measure
	
	\[
	\rho=\frac{1}{2}(\mu\times\delta_{0}+\delta_{0}\times\nu),
	\] 
	and $\delta_{0}$ is the Dirac measure at $0$. We will refer to the (compact) support of $\mu$ and $\nu$ as the \emph{component spaces} of the measure $\rho$. If $\mu=\nu$, we say that $\rho$ is \emph{symmetric}. 
	If $\mu,\nu$ are Lebesgue measures supported on intervals of length one, we call $\rho$ the \emph{additive measure
		 of Lebesgue type}. 
\end{de}
\noindent
\\
For convenience, we denote the additive measure of Lebesgue type as
\begin{align}\label{(1.2)}
	\rho_{t_1,t_2}=\frac{1}{2}(\mathcal{L}_{[t_1,t_1+1]}\times\delta_0+\delta_0\times\mathcal{L}_{[t_2,t_2+1]}).
\end{align}
In the symmetric case where $t_1 = t_2 = t$, we simply write $\rho_t$. 
Lai, Liu and Prince \cite{LLP21} investigated the spectral properties of these measures and posed the following open question:
\\
 \textbf{(Q2): When is \(\rho_{t_1,t_2}\) a spectral measure? In particular, does the  Plus space (i.e., $L^2(\rho_{-\frac{1}{2}})$) admit an exponential orthogonal basis?}

This question has attracted considerable attention from researchers. Ai, the author, and Zhou established a necessary and sufficient condition for $\rho_t$ to be a spectral measure when $t \in \mathbb{Q}\setminus{-\frac{1}{2}}$. Recently, Kolountzakis and Wu \cite{KW25} extended these results by proving that $\rho_t$ fails to be a spectral measure for all irrational values of $t$. We summarize these findings as follows:
\begin{thmB}[\cite{ALZ23,KW25}]\label{thmb}
	Let $\rho_{t}$ be the symmetric additive measure of Lebesgue type, where the component measure is the Lebesgue measure supported on $[t,t+1]$. 
	Suppose that $t\neq-\frac12$. 
	Then $\rho_{t}$ is a spectral measure if and only if $t\in\frac12\Bbb Z\setminus\{-\frac12\}$. 
\end{thmB}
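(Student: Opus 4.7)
The plan is to reduce the spectrality question for $\rho_t$ to a constrained 1D Fourier-frame problem and then attack that problem with the quantitative spectral-gap machinery of Theorems \ref{thm1.1} and \ref{thm1.5}. Suppose $\Lambda\subset\Bbb R^2$ is an orthogonal spectrum of $\rho_t$. The orthogonality of two characters indexed by $(\xi_1,\eta_1),(\xi_2,\eta_2)$ in $L^2(\rho_t)$ reads
\[
\widehat{\mathcal L_{[t,t+1]}}(\xi_1-\xi_2)+\widehat{\mathcal L_{[t,t+1]}}(\eta_1-\eta_2)=0,
\]
and since $\widehat{\mathcal L_{[t,t+1]}}(\xi)=e^{-\pi i(2t+1)\xi}\operatorname{sinc}(\xi)$ has modulus $\le 1$ with equality only at $\xi=0$, two distinct points of $\Lambda$ can share neither coordinate; hence $\Lambda=\{(\xi,\sigma(\xi)):\xi\in\Lambda_1\}$ is the graph of a bijection $\sigma\colon\Lambda_1\to\Lambda_2$ of the two coordinate projections.

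Inserting the trivial extension $F=(f,0)\in L^2(\rho_t)$ of $f\in L^2(\mathcal L_{[t,t+1]})$ into the Parseval identity for the orthonormal basis $E(\Lambda)$, and using the injectivity of $\sigma$, yields
\[
\sum_{\xi\in\Lambda_1}|\widehat f(\xi)|^2=2\|f\|^2,
\]
so $\Lambda_1$ (and by symmetry $\Lambda_2$) is a Parseval-tight frame of $\mathcal L_{[t,t+1]}$ with $A=B=2$. Applying Theorem \ref{thm1.5} then pins $\tfrac{2}{\pi^2}\le g_{\max}(\Lambda_1)\le\pi^2+2$, and Theorem \ref{thm1.1} with $C=1/\pi$ (since $|\operatorname{sinc}(\xi)|\le 1/(\pi|\xi|)$) gives $g_{\min}(\Lambda_1)\,g_{\max}(\Lambda_1)\le\tfrac12$. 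In particular $\Lambda_1$ has both a bounded largest gap and a controlled smallest gap, yielding many pairs with prescribed difference structure.

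The necessity direction is then carried out through a phase-extraction argument. For any pair of distinct points in $\Lambda$ with both sinc-values in the orthogonality relation nonzero, the identity forces $|\operatorname{sinc}(\xi_1-\xi_2)|=|\operatorname{sinc}(\eta_1-\eta_2)|$ together with
\[
(2t+1)\bigl((\xi_1-\xi_2)-(\sigma(\xi_1)-\sigma(\xi_2))\bigr)\in\Bbb Z;
\]
setting $\phi(\xi):=\xi-\sigma(\xi)$, this becomes $(2t+1)(\phi(\xi_1)-\phi(\xi_2))\in\Bbb Z$ for all such pairs. The gap bounds of the previous step guarantee an abundance of non-integer-difference pairs, so $\phi$ is forced to take values in a discrete coset of $\frac{1}{2t+1}\Bbb Z$. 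For $t\in\Bbb Q\setminus\tfrac12\Bbb Z$, a finite periodicity count contradicts the Parseval bound $A=2$ (as in \cite{ALZ23}); for irrational $t$, the irrationality of $2t+1$ combined with a Weyl-style equidistribution of $\{(2t+1)\alpha\bmod 1:\alpha\in\Lambda_1-\Lambda_1\}$ rules out consistency, reproducing \cite{KW25} inside this framework.

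For sufficiency, one constructs explicit spectra when $t\in\tfrac12\Bbb Z\setminus\{-\tfrac12\}$: for $t=0$ a bijection $\sigma\colon\tfrac12\Bbb Z\to\tfrac12\Bbb Z$ preserving the parity of the denominator and satisfying $\xi-\sigma(\xi)\in\Bbb Z$ makes all relevant differences integer (where $\operatorname{sinc}$ vanishes), handling orthogonality automatically; the remaining integer values of $t$ follow by a pure phase twist, and the half-integer values $t\in\tfrac12+\Bbb Z$ by a further $\tfrac12$-shift. The main obstacle I anticipate is the phase step for irrational $t$: one must convert the geometric information on $\Lambda_1$ provided by Theorems \ref{thm1.1} and \ref{thm1.5} into a genuine Diophantine contradiction against $2t+1\in\Bbb R\setminus\Bbb Q$, and it is precisely the simultaneous control of $g_{\min}$ and $g_{\max}$ that produces a set of differences rich enough for the equidistribution argument to succeed.
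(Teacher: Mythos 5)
Your reduction to the one–dimensional problem is sound and matches the paper's own route: Lemma \ref{thm4.2} shows each coordinate projection of a spectrum is a tight frame-spectrum of $\mathcal L_{[0,1]}$ with bound $2$, and Theorem \ref{thm1.1} with $C=1/\pi$, $A=2$ gives $g_{\min}\le 1/\sqrt2<0.8$. From there, however, both directions have genuine gaps. For necessity, the modulus/phase relations you extract --- $|\operatorname{sinc}(u)|=|\operatorname{sinc}(v)|$ and $(2t+1)(u-v)\in\Bbb Z$ --- do not pin down the spectrum: the first equation has infinitely many solutions $v$ for a given $u$, and your constraint on $\phi(\xi)=\xi-\sigma(\xi)$ only holds for pairs whose differences are non-integers in \emph{both} coordinates, so the claim that $\phi$ lands in a single coset of $\frac{1}{2t+1}\Bbb Z$ needs a connectivity argument you never supply. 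The step that actually drives the paper's proof is the geometric dichotomy of Lemma \ref{thm4.1}: if $(u,v)\in\mathcal{Z}(\widehat{\rho_t})$ and $|u|\le 0.8$, then necessarily $v=\pm u$, because the line $y=\alpha x$ with $\alpha=\frac{\sin\pi u}{\pi u}\ge 0.23$ cannot meet $\sin x$ away from the origin. Combined with the small-gap pair this forces $T(u,v)=2(2t+1)u$ to be an \emph{odd} integer, and a propagation argument (using the integer points of $\Lambda$ supplied by the periodicity Lemma \ref{thm4.4} and the tight-frame identity to show $1\in\tau_1(\Lambda)$) yields $2t+1\in\Bbb Z\setminus\{0\}$ with no equidistribution needed; your ``finite periodicity count'' and ``Weyl-style equidistribution'' are not arguments but pointers back to \cite{ALZ23,KW25}, which defeats the purpose of the proof.

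The sufficiency direction is in worse shape. First, your candidate for $t=0$ is not even orthogonal: $\Lambda_1=\frac12\Bbb Z$ contains pairs with half-integer difference, where $\operatorname{sinc}$ does \emph{not} vanish, and a parity-preserving bijection with $\xi-\sigma(\xi)\in\Bbb Z$ (for instance $\sigma=\mathrm{id}$) violates the relation $(-1)^{T(u,v)}\frac{\sin\pi u}{\pi u}+\frac{\sin\pi v}{\pi v}=0$ already at $u=v=\frac12$. The correct choice is $\sigma(\xi)=-\xi$, i.e.\ $\Lambda=\{(n,-n),(n+\frac{1}{2(2t+1)},-n-\frac{1}{2(2t+1)}):n\in\Bbb Z\}$ as in the paper's final proposition. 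Second, and more fundamentally, orthogonality alone does not make $E(\Lambda)$ a basis; completeness must be verified, which the paper does via the Jorgensen--Pedersen criterion $\sum_{\lambda\in\Lambda}|\widehat{\rho}(\xi+\lambda)|^2\equiv 1$ (Theorem \ref{thm4.6}) together with the Poisson-summation identity of Lemma \ref{thm4.7} to cancel the cross terms. Your proposal never addresses completeness at all. (As a side remark: Theorem B is quoted from \cite{ALZ23,KW25}; the paper's own derivation of it is the symmetric specialization of Theorem \ref{thm1.7}, established through Proposition \ref{thm4.5} and the explicit spectrum construction at the end of Section 4.)
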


From these results, we observe that the spectral property of \(\rho_{t}\) is now completely characterized except for the case \(t = -\frac{1}{2}\). 
To address \textbf{(Q2)}, we employ Theorem  \ref{thm1.1} to settle the spectrality of Plus space(Proposition \ref{thm4.3}).  Moreover, we extend our investigation to more general additive measures of Lebesgue type, establishing the following result:
\begin{thm}\label{thm1.7}
Let $\rho_{t_1,t_2}$ be given as \eqref{(1.2)}. 
Then $\rho_{t_1,t_2}$ is a spectral measure if and only if $t_1+t_2\in\Bbb Z\setminus\{-1\}$ or $t_1-t_2\in\Bbb Z\setminus\{0\}
$. 
\end{thm}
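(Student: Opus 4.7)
The plan is to combine the explicit orthogonality relation for exponentials in $L^2(\rho_{t_1,t_2})$ with the frame-gap estimates of Theorem~\ref{thm1.1}. Setting $c_j := t_j + 1/2$, a direct computation of inner products shows that two exponentials $e^{-2\pi i(\xi x + \eta y)}$ and $e^{-2\pi i(\xi' x + \eta' y)}$ are orthogonal in $L^2(\rho_{t_1,t_2})$ precisely when
\[
e^{-2\pi i a c_1}\,\mathrm{sinc}(a) + e^{-2\pi i b c_2}\,\mathrm{sinc}(b) = 0, \qquad a := \xi - \xi',\ b := \eta - \eta'.
\]
A first observation is that the rotation $(x,y) \mapsto (-y,x)$ of $\mathbb{R}^2$ sends $\rho_{t_1,t_2}$ to $\rho_{-t_2-1,\,t_1}$ and exchanges the two sufficient conditions: if $t_1 - t_2 = m \in \mathbb{Z}\setminus\{0\}$ for the original measure, then the parameters of the rotated measure sum to $m - 1 \in \mathbb{Z}\setminus\{-1\}$. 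Since rigid motions preserve spectrality, I may reduce the whole theorem to the case $t_1 + t_2 = k \in \mathbb{Z}\setminus\{-1\}$, equivalently $c_1 + c_2 = k+1$ is a nonzero integer.

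For sufficiency in this reduced case, the symmetric sub-case $t_1 = t_2 = k/2$ is handled by Theorem~B (since $k \neq -1$ gives $k/2 \in \tfrac{1}{2}\mathbb{Z}\setminus\{-1/2\}$). For the asymmetric sub-case $t_1 \neq t_2$, I plan to construct an explicit spectrum $\Lambda = \Lambda^{(1)} \cup \Lambda^{(2)}$ where each $\Lambda^{(j)}$ is an arithmetic progression of step $1$ lying on an affine line in $\mathbb{R}^2$, with the shift between the two pieces chosen as either $1/2$ (when $k+1$ is odd) or $1/(2(k+1))$ (when $k+1$ is even) so that the phase factors $e^{-2\pi i a c_1}$ and $e^{-2\pi i b c_2}$ cancel in the orthogonality relation via the integrality of $c_1 + c_2$. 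Completeness is then verified from the tight-frame structure (of bound $2$) inherited by $\pi_j(\Lambda)$ on $L^2([t_j, t_j + 1])$ and a direct Fourier expansion argument on each arm.

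The necessity direction, where Theorem~\ref{thm1.1} becomes decisive, proceeds by contradiction. Assume $\Lambda$ is a spectrum of $\rho_{t_1,t_2}$ but neither $t_1 + t_2 \in \mathbb{Z}\setminus\{-1\}$ nor $t_1 - t_2 \in \mathbb{Z}\setminus\{0\}$ holds. Restricting Parseval's identity to functions supported on a single arm shows that the projection $\Lambda_j := \pi_j(\Lambda)$ is a tight frame with frame bound $A = 2$ for $L^2([t_j, t_j + 1])$. Since $|\widehat{\mathcal{L}_{[t_j, t_j+1]}}(\xi)| \leq 1/(\pi|\xi|)$ for $\xi \neq 0$, Theorem~\ref{thm1.1} with $C = 1/\pi$ yields $g_{\min}(\Lambda_j)\,g_{\max}(\Lambda_j) \leq 1/2$, and Remark~\ref{thm1.2} upgrades this to $g_{\min}(\Lambda_j) \leq 1/\sqrt{3}$. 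Iterating the orthogonality relation along the infinitely many small-gap pairs guaranteed by these bounds, the modulus identity $|\mathrm{sinc}(a)| = |\mathrm{sinc}(b)|$ and the phase equation $e^{-2\pi i a c_1}\mathrm{sinc}(a) = -e^{-2\pi i b c_2}\mathrm{sinc}(b)$ must hold for infinitely many $(a,b)$. An asymptotic analysis of these phase relations, patterned after the Plus space argument of Proposition~\ref{thm4.3}, forces either $c_1 + c_2 \in \mathbb{Z}\setminus\{0\}$ or $c_1 - c_2 \in \mathbb{Z}\setminus\{0\}$, the desired contradiction.

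The principal obstacle is this final asymptotic and phase-matching step. Exponential tight frames of bound $2$ in $L^2([0,1])$ need not decompose as unions of two cosets of $\mathbb{Z}$ (perturbations of arithmetic progressions are permitted by the Kadec $1/4$-theorem and its frame-theoretic analogues), so one cannot impose a rigid combinatorial structure on $\Lambda_j$ up front; instead, all of the rigidity must be extracted from the orthogonality relation together with the quantitative gap bounds of Theorem~\ref{thm1.1} and Remark~\ref{thm1.2}. Carefully tracking the interaction between the oscillating signs of $\mathrm{sinc}$ at half-integer arguments and the phase factors $e^{-2\pi i a c_j}$ along the sequence of small-gap pairs -- and showing that only the arithmetic relations $c_1 \pm c_2 \in \mathbb{Z}\setminus\{0\}$ are consistent with this interaction -- is the key technical step that unifies Proposition~\ref{thm4.3}, Theorem~B, and the new asymmetric cases into a single argument.
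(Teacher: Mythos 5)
Your outline tracks the paper's architecture at a high level (tight-frame projections with bound $2$, the gap bound from Theorem \ref{thm1.1} and Remark \ref{thm1.2} producing an orthogonal pair with $0<|\lambda_1'-\lambda_1|<0.8$, explicit arithmetic-progression spectra for sufficiency), but the necessity direction has a genuine gap exactly where you flag ``the principal obstacle.'' From a small-gap orthogonal pair one gets, as in Lemma \ref{thm4.1}, that $a=\pm b$ and $T(a,b)=a(2t_1+1)-b(2t_2+1)\in\Bbb Z$; for $b=a$ this reads $2a(t_1-t_2)\in\Bbb Z$ with $a$ an \emph{uncontrolled} real number in $(0,0.8)$, which is far from $t_1-t_2\in\Bbb Z$. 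The Plus-space argument of Proposition \ref{thm4.3} cannot simply be ``patterned after'' here: it works only because $T\equiv 0$ when $t_1=t_2=-\tfrac12$, so the phase factor disappears and the $\mathrm{sinc}$ cancellation alone forces $a=0$. In the general case the paper needs two further inputs that your proposal does not supply: (i) a rigidity statement for $\tau_1(\Lambda)$ (Lemma \ref{thm4.4}: finite local complexity, hence integer periodicity, following Kolountzakis--Wu), which produces integer points in $\Lambda-\Lambda$, confines $\Lambda$ to a single diagonal $\{(x,\pm x)\}$, and gives $\tau_1(\Lambda)\subset\Bbb Z\cup(\lambda_1+\Bbb Z)$; and (ii) the tight-frame identity forcing $1\in\tau_1(\Lambda)$, whence $T(1,\pm 1)=2(t_1\mp t_2)\in 2\Bbb Z$ combines with the oddness of $T(\lambda_1,\lambda_2)$ to yield both the integrality and the exclusions $t_1-t_2\neq 0$, $t_1+t_2\neq -1$. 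Without (i) and (ii) the ``asymptotic phase-matching'' step does not close; indeed your own caveat that a bound-$2$ exponential tight frame need not be a union of two cosets of $\Bbb Z$ is precisely why this structure must be earned from Lemma \ref{thm4.4} rather than extracted from the small-gap pairs alone.

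Two smaller points. The rotation $(x,y)\mapsto(-y,x)$ does send $\rho_{t_1,t_2}$ to $\rho_{-t_2-1,\,t_1}$ and exchange the two arithmetic conditions, so it is a legitimate simplification for sufficiency; but it does not reduce the necessity direction to a single case, since the conclusion there is the rotation-invariant disjunction. And in the sufficiency step, tightness of each projection $\tau_j(\Lambda)$ with bound $2$ is necessary but not sufficient for $\Lambda$ to be a spectrum: one must also verify that the cross terms in $\sum_{\lambda\in\Lambda}|\widehat{\rho_{t_1,t_2}}(\xi+\lambda)|^2$ vanish identically, which the paper does via the Poisson-summation identity of Lemma \ref{thm4.7}; a ``direct Fourier expansion argument on each arm'' does not address these off-diagonal contributions.
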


The paper is organized as follows. In Section 1, we present the introduction and state our main results. Section 2 focuses on the proofs of Theorems \ref{thm1.1} and \ref{thm1.3}, which are established through an analysis of the Hurwitz zeta function. The proof of Theorem \ref{thm1.5} is provided in Section 3. Finally, in Section 4, we completely resolve \textbf{(Q2)} by applying Theorems \ref{thm1.1} and \ref{thm1.5}, and subsequently prove Theorem \ref{thm1.7}.

\section{Proof of Theorems \ref{thm1.1} and \ref{thm1.3}}
In this section, we use some estimates of Hurwitz zeta function to prove Theorem \ref{thm1.1}. 
Furthermore, we complete the proof of Theorem \ref{thm1.3}. 

Recall that the Fourier transform of a finite Borel measure is defined to be 
$$
\widehat{\mu}(\xi)=\int e^{-2\pi i\xi\cdot x}d\mu(x).
$$
\begin{proof}[\textbf{Proof of Theorem \ref{thm1.1}}]
Let $f(x)=e^{2\pi i \xi x}$ in \eqref{(1.1')}. 
It follows that 
\begin{align}\label{(2.1)}
	A\leq \sum_{\lambda\in\Lambda}|\widehat{\mu}(\lambda-\xi)|^2\leq 
	C^2\sum_{\lambda\in\Lambda}\frac{1}{|\lambda-\xi|^2}
	=	C^2\sum_{k\in\Bbb Z}\frac{1}{|\lambda_k-\xi|^2} 
\end{align}
for any $\xi\in\Bbb R\setminus\Lambda$, where $\Lambda=\{\lambda_k:k\in\Bbb Z\}$ and $\cdots<\lambda_{-1}<\lambda_0<\lambda_1<\cdots$. 
Recall that $g_k(\Lambda)=\lambda_k-\lambda_{k-1}$, 
\[
g_{\min}(\Lambda) = \inf\{c\geq0:g_k(\Lambda)<c\;\text{for\;infinitely\;many\;}k\}
\]
and 
\[
g_{\max}(\Lambda) = \sup\{c\geq0:g_k(\Lambda)>c\;\text{for\;infinitely\;many\;}k\}.
\]
Fixed small $\epsilon>0$, there exist infinitely many $k\in\{k_j\}_{j=1}^\infty$ such that $g_k(\Lambda)>d_1:=g_{\max}(\Lambda)(1-\epsilon)$. 
Put $\xi_j=\frac12(\lambda_{k_j-1}+\lambda_{k_j})$ for each $j\geq1$. 
Denote that $d_2=g_{\min}(\Lambda)(1-\epsilon)$, which implies that  $\#\{k:g_k(\Lambda)<d_2\}<\infty$. 
So we can choose some large $K>0$ such that $d_2\leq g_k(\Lambda)$ for all $|k|>K$. 
Then the \eqref{(2.1)} can be written as 
\begin{align}\label{(2.2)}
	A\leq C^2\sum_{k\in\Bbb Z}\frac{1}{|\xi_{j}-\lambda_k|^2}
	&\leq C^2\sum_{\ell=0}^\infty\Big(\frac{1}{(\xi_j-\lambda_{k_j+\ell})^2}+\frac{1}{(\xi_j-\lambda_{k_j-\ell-1})^2}\Big)
	\\
	&\notag\leq C^2\Big(\sum_{\ell=0}^\infty\frac{2}{(\frac12d_1+\ell d_2)^2}+\sum_{|k|\leq K}\frac{1}{|\xi_{j}-\lambda_k|^2}\Big).
\end{align}
Letting $j\rightarrow\infty$, one has $$\sum_{|k|\leq K}\frac{1}{|\xi_{j}-\lambda_k|^2}\rightarrow0,$$
which yields that 
\begin{align*}
	\frac{Ad_2^2}{2C^2}\leq  \sum_{\ell=0}^\infty\frac{1}{(x+\ell)^2}:=\zeta(2,x),
\end{align*}
where $x:=\frac{d_1}{2d_2}\geq \frac12$ and Hurwitz zeta function $\zeta(s,x)=\sum_{n=0}^\infty\frac{1}{(x+n)^s}$. 
Hence 
\begin{align*}
d_1d_2=2xd_2^2\leq \frac{4C^2}{A}x\zeta(2,x):=\frac{4C^2}{A}F(x).
\end{align*}
We claim that $F(x)\leq F(\frac12)=\frac14\pi^2$ for $x\geq\frac12$. 
This yields that $d_1d_2\leq \frac{C^2\pi^2}{A}$. 
Therefore 
$$
g_{max}(\Lambda)g_{min}(\Lambda)(1-\epsilon)^2\leq \frac{C^2\pi^2}{A}.
$$
For the arbitrariness of $\epsilon$, one has 
$g_{max}(\Lambda)g_{min}(\Lambda)\leq\frac{C^2\pi^2}{A}$.

In the following, we prove the claim. 
For $x\geq 0.65$, observe that 
\begin{align*}
	F(x)\leq x(\frac1x+\frac{1}{(x+1)^2})+x\int_{x+1}^\infty\frac{1}{t^2}dt
	=1+\frac{1}{x}-\frac{1}{(x+1)^2}:=f(x).
\end{align*}
It is easy to check that $f'(x)<0$  in $[0.65,\infty)$, i.e., $f(x)$ is decreasing, so 
\begin{align}\label{(6)}
	F(x)\leq f(x)\leq f(0.65)<\frac{\pi^2}{4}=F(\frac12). 
\end{align}

Now, we consider $x\in[0.5,0.65]$. 
We know that $\zeta(x,2)$ is analysis when $x\in[0.5,0.65]$. Therefore, the function $F(x)=x\zeta(x,2)$ admits a Taylor series expansion about $\frac12$:
\begin{equation*}
	F(x)-F(\frac12) = \sum_{n=1}^{\infty} \frac{F^{(n)}(\frac12)}{n!}(\Delta x)^n
\end{equation*}
where $F^{(n)}(\frac12)$ denotes the $n$-th derivative of $F$ evaluated at $\frac12$ and $\Delta x=x-\frac12$. 
Note that 
\begin{align*}
	F^{(n)}(\frac12)&=(-1)^n n! \left( \frac12(n+1) \sum_{\ell=0}^\infty \frac{1}{(\frac12+\ell)^{n+2}} - n\sum_{\ell=0}^\infty \frac{1}{(\frac12+\ell)^{n+1}} \right)
	\\
	&=(-1)^n n! \left( \frac12(n+1) \zeta(n+2,\frac12) - n\zeta(n+1,\frac12) \right).
\end{align*}
Then  
\begin{align*}
	&F(x)-F(\frac12) = \sum_{k=1}^{\infty} \left(-k(1+2\Delta x)\eta_{2k+1}+(2k-1)\eta_{2k}+(k+\frac12)\eta_{2k+2}\Delta x\right)\Delta x^{2k-1}
\end{align*}
with $\eta_s=\zeta(s,\frac12)$. 
It follows from  $\zeta(s,\frac12)=(2^s-1)\zeta(s)$ and $\zeta(s)\in(1,\frac{s}{s-1})$ for $s\geq2$  that
\begin{align*}
	&\sum_{k=1}^{\infty} \left(-k(1+2\Delta x)\eta_{2k+1}+(2k-1)\eta_{2k}+(k+\frac12)\eta_{2k+2}\Delta x\right)\Delta x^{2k-1}
	\\
	\leq& \sum_{k=1}^{\infty} \left(k(2\Delta x-1)+4\Delta x2^{2k}(-k+(k+1)\Delta x)-(k+1)\Delta x\right)\Delta x^{2k-1}
	\\
	\leq& 0, 
\end{align*}
 where $\zeta(s)$ is the well-known Riemann function. 
Hence $
	F(x)-F(\frac12)\leq 0
$ for $x\in[0.5,0.65]$, which together with \eqref{(6)} implies that the claim holds. 
This completes the proof. 
\end{proof}
\begin{proof}[\textbf{Proof of Remark 1.2}]
	In the above proof, if we choose $\xi_j=\lambda_{k_j}$, then \eqref{(2.2)}
	can be \begin{align*}
		A\leq C^2\sum_{k\neq k_j}\frac{1}{|\xi_{j}-\lambda_k|^2}+1
		&\leq 1+C^2\sum_{\ell=1}^\infty\Big(\frac{1}{(\xi_j-\lambda_{k_j+\ell})^2}+\frac{1}{(\xi_j-\lambda_{k_j-\ell})^2}\Big)
		\\
		&\notag\leq 1+C^2\Big(\sum_{\ell=1}^\infty\frac{2}{\ell^2 d_2^2}+\sum_{|k|\leq K}\frac{1}{|\xi_{j}-\lambda_k|^2}\Big).
	\end{align*}
	So, letting $j\rightarrow\infty$, 
	$$
	A-1\leq \frac{C^2\pi^2}{3d_2^2},
	$$
	that is $g_{\min}^2(\Lambda)(1-\epsilon)^2\leq \frac{C^2\pi^2}{3(A-1)}$. 
	The arbitrariness of $\epsilon$ gives $g_{\min}^2(\Lambda)\leq \frac{C^2\pi^2}{3(A-1)}$, which completes the proof. 	
\end{proof}
The preceding arguments demonstrate that some  decay condition on $|\widehat{\mu}(\xi)|$ suffices to guarantee $g_{\min}(\Lambda) \to 0$ as $A \to \infty$. We now quantify this decay relationship and establish Theorem \ref{thm1.3}.
\begin{proof}[\textbf{Proof of Theorem \ref{thm1.3}}]
	Let $\Lambda_A$ satisfy 
	\begin{align*}
		\sum_{\lambda\in\Lambda_A}\Big|\int f(x)e^{-2\pi i\lambda x}d\mu(x)\Big|\geq A\int|f(x)|^2d\mu(x),\;\;\forall\;f(x)\in L^2(\mu)\}, 
	\end{align*}
	and $d_\epsilon:=g_{\min}(\Lambda_A)(1-\epsilon)
	$ for a small $0<\epsilon<\frac12$. 
	Choose some $f(x)=e^{2\pi i\xi x}$, then the above  inequality becomes 
	\begin{align*}
		A\leq \sum_{\lambda\in\Lambda_A}|\widehat{\mu}(\xi-\lambda)|^2.
	\end{align*}
	We decompose $\Lambda_A$ into $\Lambda_A' \cup \widetilde{\Lambda_A}$, where the elements in $\Lambda_A'$ are separated by at least $d_\epsilon$ and $\widetilde{\Lambda_A}$ contains finitely many elements.
It follows from $|\widehat{\mu}(\xi)|=\mathcal{O}(|\xi|^{-\alpha})$ with $\alpha>\frac12$ that there exists $C>0$ such that 
	\begin{align}\label{(2.4)}
		A\leq \sum_{\lambda\in\Lambda_A}|\widehat{\mu}(\xi-\lambda)|^2\leq 
		\sum_{\lambda\in\widetilde{\Lambda_A}}\frac{C^2}{|\xi-\lambda|^{2\alpha}}+\sum_{\lambda\in\Lambda_A'}|\widehat{\mu}(\xi-\lambda)|^2
	\end{align}
	for $\xi\not\in\widetilde{\Lambda_A}$. 
	Choose large $\xi$ satisfying $\sum_{\lambda\in\widetilde{\Lambda_A}}\frac{C^2}{|\xi-\lambda|^{2\alpha} }\leq 1$, as the term tends zero when $\xi\rightarrow\infty$. 

We claim that $g_{\min}(\Lambda_A)\rightarrow0$, as $A\rightarrow\infty$, and we prove this by a contradiction. 
Suppose that there exists $\varepsilon>0$ such that $
\varlimsup_{A\rightarrow\infty}g_{\min}(\Lambda_A)\geq 4\varepsilon,
$ i.e., we can find a sequence $\{A_n\}$ satisfying 
$$
g_{\min}(\Lambda_{A_n})\geq 2\varepsilon
$$
for $n\geq1$, which implies that  $d_\epsilon\geq\frac12g_{\min}(\Lambda_{A_n})\geq \varepsilon$. 
According to \eqref{(2.4)} and $|\widehat{\mu}(\xi-\lambda)|\leq \frac{C}{|\xi-\lambda|^\alpha}$, one can find large $\xi_n$ such that 
\begin{align*}
			A_n\leq \sum_{\lambda\in\Lambda_{A_n}}|\widehat{\mu}(\xi_n-\lambda)|^2\leq 
2+\sum_{k=1}^\infty \frac{2C^2}{k^{2\alpha}d_\epsilon^{2\alpha}}
\leq 2+\frac{2C^2}{\varepsilon^{2\alpha}}\sum_{k=1}^\infty \frac{1}{k^{2\alpha}}.
\end{align*}
This is impossible when $n\rightarrow\infty$. 

The real axis is partitioned into a grid of unit length \( d_\epsilon \), that is,
\[
\mathbb{R} =\bigcup_{n \in \mathbb{Z}}I_n:= \bigcup_{n \in \mathbb{Z}} \big[ n d_\epsilon, (n+1) d_\epsilon \big].
\]
It is easy to see that \( I_n \) has at most one intersection with \( \xi-\Lambda_A' \) for a fixed $\xi\in\Bbb R$ and any $n\in\Bbb Z$, which implies that $\#((\xi-\Lambda_A')\cap I_n)\leq 1$ for all $n\in\Bbb Z$. 
Hence, choose some large $\xi$, from \eqref{(2.4)}, 
	\begin{align*}
	A\leq \sum_{\lambda\in\Lambda_A}|\widehat{\mu}(\xi-\lambda)|^2&\leq 
	\sum_{n\in\Bbb Z}\sum_{\lambda\in (\xi-I_n)\cap\Lambda_A'}|\widehat{\mu}(\xi-\lambda)|^2+1\leq 	\sum_{n\in\Bbb Z}\max_{x\in I_n}|\widehat{\mu}(x)|^2+1.
\end{align*}
Multiplying both sides by $d_\epsilon$ yields
\begin{align*}
	Ad_\epsilon\leq d_\epsilon+d_\epsilon\sum_{n\in \Bbb Z}\max_{\xi\in I_n}|\widehat{\mu}(\xi)|^2=d_\epsilon+\sum_{n\in \Bbb Z}|I_n||\widehat{\mu}(x_n)|^2
\end{align*}
for some $x_n\in I_n$. 
Letting $A\rightarrow\infty$, by the claim, one has 
\begin{align*}
\varlimsup_{A\rightarrow\infty}(1-\epsilon)Ag_{\min}(\Lambda_A)&\leq \varlimsup_{A\rightarrow\infty}((1-\epsilon)g_{\min}(\Lambda_A)+\sum_{n\in \Bbb Z}|I_n||\widehat{\mu}(x_n)|^2)
\\
&=\varlimsup_{|I_n|\rightarrow0}\sum_{n\in \Bbb Z}|I_n||\widehat{\mu}(x_n)|^2
\\
&=\int_{-\infty}^{\infty} |\widehat{\mu}(x)|^2dx.
\end{align*}
According to Plancherel's Theorem, one has 
$$
\int_{-\infty}^{\infty}|\widehat{\mu}(x)|^2dx
=\int_{-\infty}^{\infty}|\widehat{g}(x)|^2dx
=\int_{-\infty}^{\infty}|g(x)|^2dx, 
$$
which together with the arbitrariness of $\epsilon$ yields that 
$$
\varlimsup_{A\rightarrow\infty}Ag_{\min}(\Lambda_A)\leq \int_{-\infty}^{\infty}|g(x)|^2dx,
$$
as desired.  
\end{proof}

We take $g(x)$ as the characteristic function restricted to the interval $[0,1]$ and obtain the following conclusion.
\begin{prop}\label{thm2.1}
	There exists $\{\Lambda_A:A>0\}$ such that 	\begin{align*}
		\sum_{\lambda\in\Lambda_A}\Big|\int_{[0,1]} f(x)e^{-2\pi i\lambda x}dx\Big|^2\geq A\int_{[0,1]}|f(x)|^2dx,\;\;\forall\;f(x)\in L^2([0.1])\}, 
	\end{align*}
	and 
	\begin{align*}
		\lim_{A\rightarrow\infty}Ag_{\min}(\Lambda_A)=1.
	\end{align*}
\end{prop}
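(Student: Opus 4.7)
The plan is to exhibit an explicit, essentially canonical family achieving the bound in Corollary~\ref{thm1.4}, using rescaled integer lattices as frame spectra for $L^2([0,1])$. Concretely, for each $A>0$ I would set
\[
n=n(A):=\lceil A\rceil \qquad\text{and}\qquad \Lambda_A:=\tfrac{1}{n}\mathbb{Z}.
\]
The two things to check are then: (a) $\Lambda_A$ satisfies the lower frame inequality with constant $A$, and (b) $A\,g_{\min}(\Lambda_A)\to 1$.

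For (a) the key observation is that $\tfrac{1}{n}\mathbb{Z}$ decomposes into the $n$ cosets $\tfrac{j}{n}+\mathbb{Z}$, $j=0,1,\dots,n-1$. Since $\{e^{2\pi i kx}\}_{k\in\mathbb{Z}}$ is an orthonormal basis of $L^2([0,1])$, for each fixed $j$ the function $x\mapsto e^{-2\pi i jx/n}f(x)$ lies in $L^2([0,1])$ with the same norm as $f$, and Parseval gives
\[
\sum_{k\in\mathbb{Z}}\Big|\int_{0}^{1}f(x)e^{-2\pi i(k+j/n)x}\,dx\Big|^{2}=\int_{0}^{1}|f(x)|^{2}\,dx.
\]
Summing over $j=0,\dots,n-1$ yields
\[
\sum_{\lambda\in\Lambda_A}\Big|\int_{0}^{1}f(x)e^{-2\pi i\lambda x}\,dx\Big|^{2}=n\,\|f\|_{L^{2}([0,1])}^{2}\geq A\,\|f\|_{L^{2}([0,1])}^{2},
\]
so $E(\Lambda_A)$ is in fact an $n$-tight frame for $L^2([0,1])$, a fortiori fulfilling the required lower bound with constant $A$.

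For (b), since $\Lambda_A=\tfrac{1}{n}\mathbb{Z}$ is a uniform arithmetic progression, every gap equals $1/n$, whence $g_{\min}(\Lambda_A)=1/n=1/\lceil A\rceil$. Therefore
\[
A\,g_{\min}(\Lambda_A)=\frac{A}{\lceil A\rceil}\longrightarrow 1\qquad\text{as }A\to\infty,
\]
which, combined with Corollary~\ref{thm1.4}, gives $\lim_{A\to\infty}A\,g_{\min}(\Lambda_A)=1$ and shows that the estimate is sharp.

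There is no real obstacle here: the construction is entirely explicit and the analysis reduces to the classical fact that $\mathbb{Z}$ is an orthonormal basis of $L^2([0,1])$. The only point that warrants care is the justification of the tight-frame identity via the coset decomposition, which is what transfers the standard Parseval identity from $\mathbb{Z}$ to $\tfrac{1}{n}\mathbb{Z}$.
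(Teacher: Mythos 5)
Your construction is correct and is essentially identical to the paper's: the paper also takes $\Lambda_A=\frac{1}{n}\mathbb{Z}$ (written as the union of the cosets $\frac{j}{n}+\mathbb{Z}$) with $n$ the integer satisfying $n-1\leq A<n$, verifies the lower frame bound by summing Parseval over the $n$ cosets, and concludes from $\frac{n-1}{n}\leq A g_{\min}(\Lambda_A)\leq 1$. The only cosmetic difference is your choice $n=\lceil A\rceil$ and the (unnecessary) appeal to Corollary~\ref{thm1.4} at the end, since $A/\lceil A\rceil\to 1$ already gives the limit.
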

\begin{proof}
	For $A>0$, there exists a unique integer $n$ such that $n-1\leq A<n$, write $\Lambda_A:=\cup_{j=0}^{n-1}\{\frac{j}{n}+\Bbb Z:j=0,1,\cdots,n-1\}$. 
	It follows from 
\begin{align*}
	\sum_{\lambda\in\frac{j}{n}+\Bbb Z}\Big|\int_{[0,1]} f(x)e^{-2\pi i\lambda x}dx\Big|=\int_{[0,1]}|f(x)|^2dx,\;\;\;\forall j=0,1,\cdots,n-1
\end{align*} 
that 
\begin{align*}
	\sum_{\lambda\in\Lambda_A}\Big|\int_{[0,1]} f(x)e^{-2\pi i\lambda x}dx\Big|=n\int_{[0,1]}|f(x)|^2dx\geq A\int_{[0,1]}|f(x)|^2dx.
\end{align*}
Hence the $\Lambda_A$ satisfies the first conclusion of the proposition, and 
$g_{\min}(\Lambda_A)=\frac1n$. 
Then 
	\begin{align*}
\frac{n-1}{n}\leq Ag_{\min}(\Lambda_A)\leq 1, 
\end{align*}
which yields that $\lim_{A\rightarrow\infty}Ag_{\min}(\Lambda_A)=1$, and the proposition follows. 
\end{proof}
\section{The estimation of the essential maximal spectral gap}
For a frame spectrum \(\Lambda\), while estimates of \( g_{\min}(\Lambda) \) reveal that the spectrum can have arbitrarily small spacing, they do not provide information about the density of the spectrum (i.e., the number of spectral points per unit interval). In this section, we quantify the spectral point density and investigate its relationship with the maximal essential spectral gap and frame bounds.

When a set $E(\Lambda)$ satisfies only the upper frame inequality for the measure $\mu$, we refer to $E(\Lambda)$ as a \textit{Bessel sequence} for $\mu$. For notational convenience, we write $\lfloor x \rfloor$ for the greatest integer less than or equal to $x$.
\begin{lem}\label{thm3.1}
Let $E(\Lambda)$ be a Bessel sequence of $\mathcal{L}_{[0,1]}$ with  bounded $B$. 
For any $\xi\in\Bbb R$, then $\#(\Lambda\cap[\xi-\frac12,\xi+\frac12])\leq \lfloor \frac{\pi^2}{4}B \rfloor$. 
\end{lem}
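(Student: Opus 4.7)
The plan is to mimic the opening move in the proof of Theorem \ref{thm1.1}: plug the single exponential test function $f(x) = e^{2\pi i \xi x}$ into the Bessel inequality for $\mathcal{L}_{[0,1]}$. Since
\[
\int_{[0,1]} f(x)\, e^{-2\pi i\lambda x}\,dx = \widehat{\mathcal{L}}_{[0,1]}(\lambda - \xi)
\]
and $\|f\|_{L^2(\mathcal{L}_{[0,1]})}^2 = 1$, the Bessel condition with bound $B$ yields the pointwise estimate
\[
\sum_{\lambda \in \Lambda} \bigl|\widehat{\mathcal{L}}_{[0,1]}(\lambda - \xi)\bigr|^{2} \leq B
\]
for every $\xi \in \mathbb{R}$.

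Next I would compute explicitly $\widehat{\mathcal{L}}_{[0,1]}(t) = \dfrac{1 - e^{-2\pi i t}}{2\pi i t}$, so
\[
\bigl|\widehat{\mathcal{L}}_{[0,1]}(t)\bigr|^{2} = \frac{\sin^{2}(\pi t)}{\pi^{2} t^{2}}.
\]
The function $t \mapsto \sin(\pi t)/(\pi t)$ is decreasing on $[0, 1/2]$, so for $|t| \leq \tfrac{1}{2}$ we have $|\widehat{\mathcal{L}}_{[0,1]}(t)|^{2} \geq \bigl(\sin(\pi/2)/(\pi/2)\bigr)^{2} = 4/\pi^{2}$. This is the only quantitative ingredient needed, and it is the one place to be careful — the $4/\pi^{2}$ constant is exactly what makes the bound sharp in the form $\pi^{2}B/4$.

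Finally, for any $\xi \in \mathbb{R}$, restrict the sum on the left-hand side to those $\lambda \in \Lambda \cap [\xi - \tfrac{1}{2}, \xi + \tfrac{1}{2}]$, which all satisfy $|\lambda - \xi| \leq \tfrac{1}{2}$. The lower bound on $|\widehat{\mathcal{L}}_{[0,1]}|^{2}$ then gives
\[
\frac{4}{\pi^{2}}\, \#\bigl(\Lambda \cap [\xi - \tfrac{1}{2}, \xi + \tfrac{1}{2}]\bigr) \leq \sum_{\lambda \in \Lambda} \bigl|\widehat{\mathcal{L}}_{[0,1]}(\lambda - \xi)\bigr|^{2} \leq B,
\]
so $\#(\Lambda \cap [\xi - \tfrac{1}{2}, \xi + \tfrac{1}{2}]) \leq \pi^{2}B/4$, and since the left-hand side is an integer we may replace the right-hand side by $\lfloor \pi^{2}B/4 \rfloor$.

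There is no real obstacle here; the whole proof is two lines once the test function $e^{2\pi i\xi x}$ is chosen. The mildly delicate point is verifying monotonicity of $\sin(\pi t)/(\pi t)$ on $[0, 1/2]$ to conclude the uniform lower bound $4/\pi^{2}$, but this is elementary.
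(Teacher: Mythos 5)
Your proposal is correct and follows essentially the same route as the paper: test with $f(x)=e^{2\pi i\xi x}$, compute $|\widehat{\mathcal{L}}_{[0,1]}(t)|^2=\sin^2(\pi t)/(\pi t)^2\geq 4/\pi^2$ for $|t|\leq\frac12$, restrict the Bessel sum to $\Lambda\cap[\xi-\frac12,\xi+\frac12]$, and take the floor since the count is an integer. No differences worth noting.
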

\begin{proof}
	Let $x\in\Bbb R$ and $f(x)=e^{2\pi i\xi x}$.  
	It follows that 
	\begin{align*}
		\sum_{\lambda\in\Lambda}\Big|\int_{[0,1]}f(x)e^{-2\pi i\lambda x}dx\Big|^2=\sum_{\lambda\in\Lambda}|\widehat{\mathcal{L}_{[0,1]}}(-\xi+\lambda)|^2\leq B.
	\end{align*}
	Note that \begin{align*}
		\sum_{\lambda\in\Lambda}|\widehat{\mathcal{L}_{[0,1]}}(-\xi+\lambda)|^2=		\sum_{\lambda\in\Lambda}\Big|\frac{\sin\pi(\xi-\lambda)}{\pi(\xi-\lambda)}\Big|^2\geq \frac{4}{\pi^2}\#(\Lambda\cap[\xi-\frac12,\xi+\frac12])
	\end{align*}
	Hence 
	\begin{align*}
	\#(\Lambda\cap[\xi-\frac12,\xi+\frac12])\leq \frac{\pi^2}{4}B. 	
	\end{align*}
	So the lemma follows from that $\#(\cdot)$ is an integer. 
\end{proof}
\begin{proof}[\textbf{Proof of Theorem \ref{thm1.5}}]
We first prove the estimate for the upper bound. 
		Choose some $\lambda_k\in\Lambda$ satisfying $$
		\lambda_{k+1}-\lambda_k\geq \sup\{g_k(\Lambda)\}-\epsilon
		$$
		for a small $\epsilon>0$. 
		Let $f(x)=e^{-2\pi i\xi}$ with $\xi=\frac{1}{2}(\lambda_k+\lambda_{k+1})$. 
It follows from Lemma \ref{thm3.1} that 
		\begin{align*}
			A\int_{[0,1]}|f(x)|^2dx\leq \sum_{\lambda\in\Lambda}|\widehat{\mathcal{L}_{[0,1]}}(\xi+\lambda)|^2&\leq \sum_{j=0}^\infty(\sum_{\lambda\in I_j}\frac{1}{(\xi-\lambda)^2}+\sum_{\lambda\in \widetilde{I}_j}\frac{1}{(\xi-\lambda)^2})
			\\
			&\leq \sum_{j=0}^\infty\frac{\#(I_j\cup\widetilde{I_j})}{(0.5g_{k}(\Lambda)+j)^2}
					\\
			&\leq 2\lfloor \frac{\pi^2}{4}B \rfloor \zeta(2,0.5g_k(\Lambda)),
		\end{align*}
		where $I_j:=[\lambda_{k+1}+j,\lambda_{k+1}+j+1]\cap\Lambda$ and $\widetilde{I}_j:=[\lambda_{k}-j-1,\lambda_{k}-j]\cap\Lambda$ for $j\geq0$. 
	Thus 
	\begin{align*}
		g_k(\Lambda)\leq 2\zeta^{-1}(2,2A^{-1}\lfloor \frac{\pi^2}{4}B \rfloor),
	\end{align*}	
	where the notation $\zeta^{-1}(2,x)$ denotes the inverse function of $\zeta(2,x)$. 
	A simple calculation gives 
	\begin{align*}
		g_k(\Lambda)\leq \frac{\pi^2B}{2A}(1+\sqrt{1+\frac{8A}{\pi^2B}}).
	\end{align*}
	Letting $\epsilon\rightarrow0$, we obtain that 
	$$\sup\{g_k(\Lambda)\}\leq \frac{\pi^2B}{2A}(1+\sqrt{1+\frac{8A}{\pi^2B}})< \frac{\pi^2B}{A}+2.$$
	
	In the following, we prove the theorem by showing $g_{\max}(\Lambda)\geq \frac{4}{\pi^2B}$. 
If $[\lambda,\lambda+1]\cap\Lambda=\{\lambda\}$ for infinitely many $\lambda\in\Lambda$, then $$g_{\max}(\Lambda)\geq 1>\frac{4}{\pi^2B}.
$$ 
If not, we assume $[\lambda,\lambda+1]\cap\Lambda=\{\lambda,\lambda_1,\cdots,\lambda_s\}$ for some $s\geq1$ and $\lambda:=\lambda_0<\lambda_1<\cdots<\lambda_s$.We define $\lambda_{s+1}$ to be the immediate successor of $\lambda_s$ in $\Lambda$.
Hence, using Lemma \ref{thm3.1} again,  
\begin{align*}
	\max\{\lambda_j-\lambda_{j-1}:1\leq j\leq s+1\}\geq \frac{\lambda_{s+1}-\lambda}{s+1}\geq \frac{1}{\#(\Lambda\cap[\lambda,\lambda+1])}\geq \frac{4}{\pi^2B}, 
\end{align*}
as desired. 
\end{proof}

A direct application of Theorem \ref{thm1.5} yields the following corollary.

\begin{cor}\label{thm3.2}
	Let $\Lambda$ be a tight frame-spectrum of $\mathcal{L}_{[0,1]}$. 
	Then \[
 g_{\max}(\Lambda)\leq \sup\{g_k(\Lambda):k\in\Bbb Z\}\leq\pi^2+2.
	\]
\end{cor}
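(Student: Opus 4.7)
The statement is an immediate specialization of Theorem \ref{thm1.5} to the case of a tight frame. The plan is, first, to observe that by definition of a tight frame the upper and lower frame bounds coincide: one may take $A = B$. Plugging this equality into the upper estimate $\sup\{g_k(\Lambda):k\in\mathbb{Z}\}\leq\frac{\pi^2B}{A}+2$ provided by Theorem \ref{thm1.5} collapses the right-hand side to $\pi^2 + 2$, which is precisely the bound we want on the ordinary supremum of gaps.

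The remaining inequality $g_{\max}(\Lambda)\leq \sup\{g_k(\Lambda):k\in\mathbb{Z}\}$ is a purely formal consequence of the definitions. Recall that $g_{\max}(\Lambda) = \sup\{c\geq 0 : g_k(\Lambda) > c \text{ for infinitely many } k\}$. Any such admissible $c$ is strictly dominated by $g_k(\Lambda)$ for at least one (indeed infinitely many) index $k$, and hence is bounded above by $\sup_{k\in\mathbb{Z}} g_k(\Lambda)$; taking the supremum over all admissible $c$ yields the desired comparison between the essential maximal gap and the ordinary supremum.

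There is no substantive obstacle here: the proof is a one-line application of Theorem \ref{thm1.5} at $A=B$, combined with the trivial fact that an essential supremum (ignoring finitely many exceptions) never exceeds the full supremum. The only thing worth double-checking is that the two-sided frame inequality hypothesized in Theorem \ref{thm1.5} is indeed the same condition as ``tight frame-spectrum'' with $A=B$, which is immediate from the definition of a tight frame recalled in Subsection 1.1.
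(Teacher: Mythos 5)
Your proposal is correct and matches the paper exactly: the paper offers no separate proof, stating only that the corollary is ``a direct application of Theorem \ref{thm1.5},'' i.e.\ setting $A=B$ in the bound $\frac{\pi^2 B}{A}+2$, with the comparison $g_{\max}(\Lambda)\leq\sup_k g_k(\Lambda)$ already contained in the statement of that theorem.
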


\section{Spectrality of the additive measure of Lebesgue type}
In this section, we establish the spectral properties of the additive Lebesgue-type measure, thereby completing the proof of Theorem \ref{thm1.7}. Notably, we demonstrate that the Plus space does not admit an exponential orthogonal basis (Proposition \ref{thm4.3}).

Recall the additive measure of Lebesgue type defined as 
\begin{align*}
	\rho_{t_1,t_2}=\frac{1}{2}(\mathcal{L}_{[t_1,t_1+1]}\times\delta_0+\delta_0\times\mathcal{L}_{[t_2,t_2+1]}).
\end{align*}
To study the spectrality of $\rho_{t_1,t_2}$, we analyze the orthogonal structure of $L^2(\rho_{t_1,t_2})$.
Suppose $\Lambda\subseteq \Bbb R^2$ is a spectrum of $\rho_{t_1,t_2}$. Then, for any $\lambda',\lambda''\in \Lambda$, 
$$
\lambda'-\lambda''\in\mathcal{Z}(\widehat{\rho_{t_1,t_2}})\cup\{\textbf{0}\}, 
$$
where $\mathcal{Z}(\widehat{\rho_{t_1,t_2}})$ denotes the zero set of the Fourier transform  $\widehat{\rho_{t_1,t_2}}$.
From \cite{LLP21}, we know that $(\lambda_1,\lambda_2)\in\mathcal{Z}(\widehat{\rho_{t_1,t_2}})$ if and only if   
\begin{align*}
	e^{\pi(\lambda_1(2t_1+1)-\lambda_2(2t_2+1))}\frac{\sin\pi\lambda_1}{\pi\lambda_1}+\frac{\sin\pi\lambda_2}{\pi\lambda_2}=0,
\end{align*}
which implies that  
\begin{align}\label{(4.1)}
(\lambda_1,\lambda_2)\in(\Bbb Z\setminus\{0\})^2\;\text{or}\;\begin{cases}
		T(\lambda_1,\lambda_2)\in\Bbb Z;
		\\
		(-1)^{	T(\lambda_1,\lambda_2)}\frac{\sin\pi\lambda_1}{\pi\lambda_1}+\frac{\sin\pi\lambda_2}{\pi\lambda_2}=0,
	\end{cases}
\end{align}
where $T(\lambda_1,\lambda_2):=\lambda_1(2t_1+1)-\lambda_2(2t_2+1)$. 
\begin{lem}\label{thm4.1}
Let $\rho_{t_1,t_2}$ be given as \eqref{(1.2)} and  $(\lambda_1,\lambda_2)\in\mathcal{Z}(\widehat{\rho_{t_1,t_2}})$. 
If $|\lambda_1|\leq0.8$, then $\lambda_1=\lambda_2$ or $\lambda_1+\lambda_2=0$. 
\end{lem}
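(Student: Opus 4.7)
The plan is to exploit the explicit characterization of $\mathcal{Z}(\widehat{\rho_{t_1,t_2}})$ in \eqref{(4.1)} together with sharp pointwise bounds on the sinc function $g(x):=\sin(\pi x)/(\pi x)$. I would split according to the two alternatives in \eqref{(4.1)}. The first alternative $(\lambda_1,\lambda_2)\in(\Bbb Z\setminus\{0\})^2$ is disposed of in one line, since the only integer in $[-0.8,0.8]$ is $0$, contradicting $\lambda_1\in\Bbb Z\setminus\{0\}$.

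In the second alternative one has $T(\lambda_1,\lambda_2)\in\Bbb Z$ together with $g(\lambda_2)=(-1)^{T+1}g(\lambda_1)$. Since $g$ is even, strictly positive on $(-1,1)$ and strictly decreasing on $[0,1]$, the hypothesis $|\lambda_1|\leq 0.8$ gives $g(\lambda_1)\geq g(0.8)=\sin(0.2\pi)/(0.8\pi)\approx 0.234$. I would then split by the parity of $T$.

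When $T$ is even, the identity forces $g(\lambda_2)\leq -g(0.8)$. But the global infimum of $g$ on $\Bbb R$ is attained at the first positive solution of $\tan(\pi x)=\pi x$, namely $x\approx 1.43$, giving $\inf_{\Bbb R} g\approx -0.217>-g(0.8)$, so this subcase is impossible. When $T$ is odd, one needs $g(\lambda_2)=g(\lambda_1)$. For $|\lambda_2|\geq 1$ I would combine (i) $g\leq 0$ on the negative lobes $(2k-1,2k)$ for $k\geq 1$ (and their reflections), and (ii) the envelope $|g(x)|\leq 1/(\pi|x|)\leq 1/(2\pi)<g(0.8)$ on the positive lobes $(2k,2k+1)$ with $k\geq 1$; both exclude any solution with $|\lambda_2|\geq 1$. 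Hence $|\lambda_2|<1$, and the strict monotonicity of $g$ in $|\lambda_2|$ on $(-1,1)$ forces $|\lambda_2|=|\lambda_1|$, i.e.\ $\lambda_2=\pm\lambda_1$, which is exactly the conclusion.

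The main obstacle is the sharpness of the threshold $0.8$: the whole argument hinges on the numerical inequality $g(0.8)>|\inf_{\Bbb R}g|$. I would justify this rigorously by locating the unique critical point of $g$ on each lobe via $\tan(\pi x)=\pi x$, using $|g(x)|\leq 1/(\pi|x|)$ to dispatch every minimum after the first, and reducing to a one-variable numerical comparison at $x\approx 1.43$ against the explicit value $\sin(0.2\pi)/(0.8\pi)$. No deeper analytic machinery is needed beyond these standard monotonicity and decay properties of the sinc kernel.
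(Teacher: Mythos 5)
Your proposal is correct and follows essentially the same route as the paper: both arguments reduce to the numerical comparison between $\sin(0.2\pi)/(0.8\pi)\approx 0.234$ and the first negative extremum of the sinc kernel at the root of $\tan x = x$ (about $-0.217$), which forces $|\lambda_2|<1$ and then monotonicity of the sinc on $(-1,1)$ gives $|\lambda_2|=|\lambda_1|$. Your write-up is somewhat more explicit than the paper's (which appeals to a figure of tangent lines and leaves the parity split and the integer alternative implicit), but the underlying idea is identical.
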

\begin{proof}
A simple estimation allows us to obtain that \begin{align}\label{(4.2)}
\alpha:=\frac{\sin\pi\lambda_1}{\pi\lambda_1}\geq \frac{\sin0.8\pi}{0.8\pi}\geq 0.23
\end{align}
 when $|\lambda_1|\leq 0.8$. 
From \eqref{(4.1)} and the condition $|\lambda_1|\leq0.8$, one has 
$T(\lambda_1,\lambda_2)\in\Bbb Z$ and $$
(-1)^{	T(\lambda_1,\lambda_2)}\frac{\sin\pi\lambda_1}{\pi\lambda_1}+\frac{\sin\pi\lambda_2}{\pi\lambda_2}=0.
$$
Write $f(x):=	\frac{\sin x}{ x}$ for $x\neq0$. 
Then assumption $(\lambda_1,\lambda_2)\in\mathcal{Z}(\widehat{\rho_{t_1,t_2}})$ allow us to know that $\lambda_2$ is the solution of $f(\pi x)=\pm\alpha$, i.e., $\sin\pi x=\pm\alpha \pi x$. 
Therefore, $\pi\lambda_2$ is the $x$-coordinate of the intersection point between the sine function $\sin x$ and the straight line $\alpha x$ (or $-\alpha x$). As can be seen from the figure, this straight line must lie between the two tangent lines (shown as the red dashed lines in \textbf{Figue 1}) when $\lambda_1\pm\lambda_2\neq0$. Thus, the absolute value of the line's slope should satisfy
\begin{equation*}
	|\alpha| \leq |f(x_0)|
\end{equation*}
where $x_0$ is the minimal positive solution to the equation
\begin{equation*}
	\tan x_0 = x_0.
\end{equation*}
This is impossible, as $|f(x_0)|\leq 0.22<\alpha$ and \eqref{(4.2)}. 
It yields that $\lambda_1=\lambda_2$ or $\lambda_1+\lambda_2=0$.
The lemma is proved. 
\end{proof}
\bigskip
\begin{tikzpicture}
	\begin{axis}[
		width=16cm,          
		height=5cm, 
		xlabel = $x$,
		ylabel = $y$,
		domain = -3*pi:3*pi,      
		samples = 100,            
		ymin = -1.5,              
		ymax = 1.5,               
		axis lines = middle,      
		grid = both,              
		xtick = {-6.2832, -3.1416, 0, 3.1416, 6.2832}, 
		xticklabels = {$-2\pi$, $-\pi$, $0$, $\pi$, $2\pi$}, 
		legend pos = north east    
		]
		\addplot[black, thick] {sin(deg(x))};
		
		\addplot[red, dashed, thick] {-0.2176*x};
		\addplot[red, dashed, thick] {0.1274*x};

	\end{axis}
	\node[anchor=north] at (current axis.south) {\large\bfseries Figue 1};
\end{tikzpicture}

By Lemma \ref{thm4.1} (or Theorem 4.2 of \cite{LLP21}), for given a spectrum $\Lambda$ of $\rho_{t_1,t_2}$, we have $\lambda_1\neq\lambda_1'$ and $\lambda_2\neq \lambda_2'$ for any $(\lambda_1,\lambda_2)\neq(\lambda_1',\lambda_2')\in\Lambda$. 
This implies that $\tau_j((\lambda_1,\lambda_2))=\lambda_j$ is an injective map on $\Lambda$. 
Then we obtain the following lemma.

\begin{lem}\label{thm4.2}
	Let $\rho_{t_1,t_2}$ be a spectral measure with a spectrum $\Lambda$. 
	Then $\tau_j(\Lambda)$ is a tight frame-spectrum of $\mathcal{L}_{[0,1]}$ with frame bounded $2$, where $$\tau_j(\Lambda):=\{\lambda_j:(\lambda_1,\lambda_2)\in\Lambda\}  
	$$ and $j=1,2$. 
\end{lem}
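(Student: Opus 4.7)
The plan is to derive the tight frame identity for $\mathcal{L}_{[0,1]}$ by specializing Parseval's identity in $L^2(\rho_{t_1,t_2})$ to test functions supported on a single one of the two component axes, then transferring the identity to $[0,1]$ by a unimodular translation.

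Since $\Lambda$ is a spectrum of $\rho_{t_1,t_2}$, the exponentials $\{e^{-2\pi i(\lambda_1 x+\lambda_2 y)}:(\lambda_1,\lambda_2)\in\Lambda\}$ form an orthonormal basis of $L^2(\rho_{t_1,t_2})$. For any $g\in L^2(\mathcal{L}_{[t_1,t_1+1]})$, I would define $F\in L^2(\rho_{t_1,t_2})$ to equal $g(x)$ on the $x$-axis slice $[t_1,t_1+1]\times\{0\}$ and to vanish on the $y$-axis slice $\{0\}\times[t_2,t_2+1]$; this is well-defined $\rho_{t_1,t_2}$-almost everywhere, since the two component supports overlap only at the origin, which is $\rho_{t_1,t_2}$-null. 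Substituting $F$ into Parseval and multiplying by $4$ yields the tight identity
\begin{equation*}
2\int_{t_1}^{t_1+1}|g(x)|^2\,dx=\sum_{(\lambda_1,\lambda_2)\in\Lambda}\left|\int_{t_1}^{t_1+1}g(x)e^{-2\pi i\lambda_1 x}\,dx\right|^2.
\end{equation*}

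Next I would invoke the injectivity of $\tau_1$ on $\Lambda$, which is already recorded in the paragraph preceding the lemma as a consequence of Lemma~\ref{thm4.1}: two distinct points of $\Lambda$ sharing a first coordinate would produce a nonzero element of $\mathcal{Z}(\widehat{\rho_{t_1,t_2}})$ of the form $(0,\ast)$, contradicting the lemma. This reindexes the right-hand side as a sum over $\lambda\in\tau_1(\Lambda)$. Finally, the change of variable $x\mapsto x+t_1$ converts $\mathcal{L}_{[t_1,t_1+1]}$ into $\mathcal{L}_{[0,1]}$ and multiplies each exponential by the unimodular constant $e^{-2\pi i\lambda t_1}$, leaving all absolute values intact. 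This exhibits $\tau_1(\Lambda)$ as a tight frame-spectrum of $\mathcal{L}_{[0,1]}$ with frame bound $2$; the case $j=2$ is identical by symmetry. The only genuine technical point is the well-definedness of $F$ at the intersection of the two component supports, handled by the observation that the origin carries no $\rho_{t_1,t_2}$-mass; everything else is a direct unpacking of Parseval's identity.
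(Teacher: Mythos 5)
Your proposal is correct and follows essentially the same route as the paper: specialize Parseval's identity in $L^2(\rho_{t_1,t_2})$ to a function supported on one component axis, use the injectivity of $\tau_j$ (from Lemma~\ref{thm4.1}) to reindex the sum over $\tau_j(\Lambda)$, and absorb the translation to $[0,1]$ into a unimodular factor. The only difference is cosmetic (the paper writes the test function as $g(x-t_1)$ from the start rather than translating at the end), and your remark on well-definedness at the origin is a harmless extra.
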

\begin{proof}
Let $f(x,0)=g(x-t_1)$ for $x\in[t_1,t_1+1]$, and $f(0,y)=0$ for all $y\in[t_2,t_2+1]$ with $g(x)\in L^2([0,1])$.  
It follows from Parseval's theorem and  $f(x,y)\in L^2(\rho_{t_1,t_2})$ that 
\begin{align*}
\int |f(x,y)|^2d\rho_{t_1,t_2}=\frac12\int_{t_1}^{t_1+1}|f(x,0)|^2dx
&=\sum_{(\lambda_1,\lambda_2)\in\Lambda}
\Big|\frac12\int_{t_1}^{t_1+1}f(x,0)e^{-2\pi i\lambda_1x}dx\Big|^2
\\
&=\frac14\sum_{\lambda_1\in\tau_1(\Lambda)}\Big|\int_0^{1}g(x)e^{-2\pi i\lambda_1x}dx\Big|^2.
\end{align*}
This yields that $$
2\int_0^1|g(x)|^2dx=\sum_{\lambda_1\in\tau_1(\Lambda)}\Big|\int_0^{1}g(x)e^{-2\pi i\lambda_1x}dx\Big|^2
$$
for $g(x)\in L^2([0,1]                                                                                                                                                                                                                                                                                                                                                                                                                                                                                                                                                                                                                                                                                                                                                                                                                                                                            )$. 
Therefore, $\tau_1(\Lambda)$ is a tight frame-spectrum of $\mathcal{L}_{[0,1]}$. By similar arguments, we conclude that  $\tau_2(\Lambda)$ is tight frame-spectrum of $\mathcal{L}_{[0,1]}$, which completes the proof.
\end{proof}
\begin{prop}\label{thm4.3}
	The Plus space $L^2(\rho_{-\frac12})$ admits no exponential orthogonal basis. 
\end{prop}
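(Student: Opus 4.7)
The plan is to argue by contradiction: suppose $\Lambda \subseteq \mathbb{R}^2$ is a spectrum of $\rho_{-\frac12}$, and derive a contradiction by combining the frame-gap estimate of Theorem~\ref{thm1.1} (applied to the projection $\tau_1(\Lambda)$) with the rigidity of $\mathcal{Z}(\widehat{\rho_{-\frac12}})$ provided by Lemma~\ref{thm4.1}.

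First, by Lemma~\ref{thm4.2}, $\tau_1(\Lambda)$ is a tight frame-spectrum for $\mathcal{L}_{[0,1]}$ with frame bound $A=B=2$. Since $|\widehat{\mathcal{L}_{[0,1]}}(\xi)|=|\sin\pi\xi/(\pi\xi)|\le 1/(\pi|\xi|)$, Theorem~\ref{thm1.1} applied with $C=1/\pi$ and $A=2$ yields
\[
g_{\min}(\tau_1(\Lambda))\cdot g_{\max}(\tau_1(\Lambda))\le \frac{1}{2}.
\]

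Second, I would exploit that $t_1=t_2=-\frac12$ forces $T(\lambda_1,\lambda_2)\equiv 0$, so that the zero-set condition on any nonzero difference $\lambda-\lambda' = (x_1,x_2)$ with $\lambda\neq\lambda'\in\Lambda$ reduces to the single equation
\[
\frac{\sin\pi x_1}{\pi x_1}+\frac{\sin\pi x_2}{\pi x_2}=0.
\]
If in addition $|x_1|\le 0.8$, then Lemma~\ref{thm4.1} forces $x_1=\pm x_2$; using that $\sin t/t$ is even, the equation collapses in both cases to $2\sin(\pi x_1)/(\pi x_1)=0$, so $x_1\in\mathbb{Z}\setminus\{0\}$ and hence $|x_1|\ge 1$, contradicting $|x_1|\le 0.8$. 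Consequently every pair of distinct points of $\tau_1(\Lambda)$ differs by more than $0.8$, so every consecutive gap satisfies $g_k(\tau_1(\Lambda))>0.8$, giving
\[
g_{\min}(\tau_1(\Lambda))\cdot g_{\max}(\tau_1(\Lambda))>0.64>\frac{1}{2},
\]
which contradicts the previous display.

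The main obstacle is not deep: it amounts to checking that in the Plus-space setting both conclusions $x_1=x_2$ and $x_1=-x_2$ from Lemma~\ref{thm4.1} genuinely force $x_1$ to be a nonzero integer. This hinges on the parity of $T$ (which equals $0$ here) making $(-1)^T=1$, so the two $\sin/x$ terms add rather than cancel. Once this verification is in hand, the remainder is a clean concatenation of Theorem~\ref{thm1.1}, Lemma~\ref{thm4.1}, and Lemma~\ref{thm4.2}, requiring no additional ingredients.
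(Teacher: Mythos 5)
Your proposal is correct and follows essentially the same route as the paper: Lemma~\ref{thm4.2} gives the tight frame bound $2$, Theorem~\ref{thm1.1} bounds the gaps of $\tau_1(\Lambda)$, and Lemma~\ref{thm4.1} together with the vanishing of $T$ at $t_1=t_2=-\tfrac12$ forces any projected difference of modulus at most $0.8$ to satisfy $2\sin(\pi x_1)/(\pi x_1)=0$, hence to be a nonzero integer. The only cosmetic difference is that you rule out small gaps wholesale and contradict the product bound $g_{\min}g_{\max}\le \tfrac12$ via $0.64>\tfrac12$, whereas the paper uses $g_{\min}\le 1/\sqrt2<0.8$ to exhibit one small gap and contradicts that directly.
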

\begin{proof}
	We prove the proposition by a contradiction. 
Let $\Lambda$ be a spectrum of $\rho_{-\frac12}$. 
From Lemma \ref{thm4.2}, we obtain that $\tau_1(\Lambda)$ is a tight frame-spectrum of $\mathcal{L}_{[0,1]}$ with frame bound $2$, where $\tau_1(\Lambda)=\{\lambda_1:(\lambda_1,\lambda_2)\in\Lambda\}$. 
It follows from Theorem \ref{thm1.1} or Remark \ref{thm1.2} that 
$g_{\min}(\tau_1(\Lambda))\leq \frac{1}{\sqrt{2}}<0.8.$
Then we can find some $(\lambda_1,\lambda_2),(\lambda_1',\lambda_2')\in\Lambda$ such that $0<|\lambda_1'-\lambda_1|<0.8$. 
So, by Lemma \ref{thm4.1} and $(\lambda_1',\lambda_2')-(\lambda_1,\lambda_2)\in\mathcal{Z}(\widehat{\rho_{-\frac12}})$, $\lambda_1'-\lambda_1=\lambda_2'-\lambda_2$ or $\lambda_1'-\lambda_1=\lambda_2-\lambda_2'$. 
Finally, \eqref{(4.1)} tells us that 
$$
0=\frac{\sin\pi(\lambda_1'-\lambda_1)}{\lambda_1'-\lambda_1}+\frac{\sin\pi(\lambda_2'-\lambda_2)}{\lambda_2'-\lambda_2}=\frac{2\sin\pi(\lambda_1'-\lambda_1)}{\lambda_1'-\lambda_1},
$$
which yields that $\lambda_1'-\lambda_1=0$, a contradiction. 
\end{proof}

Next, we will further investigate the case \( (t_1,t_2)\neq(-\frac12,-\frac12) \). Adopting the strategy from \cite{KW25}, we will establish periodicity by utilizing the finite complexity property of the tiling structure. 
A discrete set $\Lambda\subset\Bbb R$ is said to have \textit{finite local complexity} if $g_k(\Lambda)$ take only finitely many different values. 
\begin{lem}\label{thm4.4}
		Let $\rho_{t_1,t_2}$ be a spectral measure with a spectrum $\Lambda$. 
		Suppose that $(t_1,t_2)\neq(-\frac12,-\frac12)$.
	Then there exists $K\in\Bbb Z\setminus\{0\}$ such that  $$\tau_1(\Lambda)=\tau_1(\Lambda)+K, 
	$$ 
	where $\tau_1(\Lambda)$ be given as Lemma  \ref{thm4.2}.
\end{lem}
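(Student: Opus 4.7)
My plan is to follow the strategy of \cite{KW25}: establish finite local complexity of $\tau_1(\Lambda)$, extract a periodic translation by a pigeonhole argument on the resulting finite alphabet, and upgrade that translation to an integer period using the orthogonality relations in $\mathcal{Z}(\widehat{\rho_{t_1,t_2}})$.

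I will begin by invoking Lemma \ref{thm4.2} to see that both $\tau_1(\Lambda)$ and $\tau_2(\Lambda)$ are tight frame-spectra of $\mathcal{L}_{[0,1]}$ with frame bound $2$. Theorem \ref{thm1.5} with $A=B=2$ then bounds every consecutive gap of $\tau_j(\Lambda)$ by $M:=\pi^2+2$, and Lemma \ref{thm3.1} caps the number of points of $\tau_j(\Lambda)$ in any unit interval by $4$. Enumerating $\Lambda = \{(\lambda_1^{(k)},\lambda_2^{(k)})\}_{k\in\Bbb Z}$ with $\lambda_1^{(k)}$ strictly increasing, I will study the gap-vector sequence
\[
w_k:=(\lambda_1^{(k+1)}-\lambda_1^{(k)},\,\lambda_2^{(k+1)}-\lambda_2^{(k)})\in\mathcal{Z}(\widehat{\rho_{t_1,t_2}}).
\]

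The core technical step is to prove that $\{w_k:k\in\Bbb Z\}$ is finite. The first coordinates of $w_k$ already lie in $(0,M]$. For the second coordinates, I will split according to the dichotomy in \eqref{(4.1)}: the ``integer'' branch forces finitely many values immediately, while the ``transcendental'' branch confines $w_k$ to the countable family of lines $L_n=\{(u,v):(2t_1+1)u-(2t_2+1)v=n\}$, intersected with the locus carved out by the accompanying sine identity. The hypothesis $(t_1,t_2)\neq(-\tfrac12,-\tfrac12)$ is precisely what makes the linear form $T$ nontrivial, so that the $L_n$'s are genuine constraints rather than the entire plane, and on each line the sine identity admits only a discrete set of solutions. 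To control $|\lambda_2^{(k+1)}-\lambda_2^{(k)}|$, I plan to exploit the density bound on $\tau_2(\Lambda)$ from Lemma \ref{thm3.1}: an excessively large second-coordinate gap would force many points of $\tau_2(\Lambda)$ to lie strictly between $\lambda_2^{(k)}$ and $\lambda_2^{(k+1)}$, whose preimages under the canonical bijection $\sigma:\tau_1(\Lambda)\to\tau_2(\Lambda)$ must lie outside $(\lambda_1^{(k)},\lambda_1^{(k+1)})$; feeding the resulting pairs back into \eqref{(4.1)} should produce incompatible constraints and thereby bound $|\lambda_2^{(k+1)}-\lambda_2^{(k)}|$.

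Once finite local complexity is in hand, the bi-infinite word $(w_k)_{k\in\Bbb Z}$ takes values in a finite alphabet, and a standard pigeonhole argument in the spirit of \cite{KW25} produces indices $k<k'$ whose patterns agree on arbitrarily long forward and backward windows, giving $K:=\lambda_1^{(k')}-\lambda_1^{(k)}>0$ with $\tau_1(\Lambda)+K=\tau_1(\Lambda)$. To see that $K\in\Bbb Z$, I note that shifting by $K$ yields, for each $\lambda_1\in\tau_1(\Lambda)$, a difference $(K,\sigma(\lambda_1+K)-\sigma(\lambda_1))\in\mathcal{Z}(\widehat{\rho_{t_1,t_2}})$; if $K\notin\Bbb Z$ then $\sin\pi K\neq 0$, and \eqref{(4.1)} would force both $T(K,\cdot)\in\Bbb Z$ and the sine identity to hold simultaneously for infinitely many second-coordinate differences, contradicting the bounded density of $\tau_2(\Lambda)$. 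The main obstacle is the finite-local-complexity step, specifically the control of $|\lambda_2^{(k+1)}-\lambda_2^{(k)}|$; this is exactly where the non-degeneracy $(t_1,t_2)\neq(-\tfrac12,-\tfrac12)$ is essential, consistent with the Plus-space case being handled separately by Proposition \ref{thm4.3}.
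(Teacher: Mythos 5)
Your overall architecture---first establish finite local complexity of $\tau_1(\Lambda)$, then invoke the periodicity mechanism of \cite{KW25}---is exactly the route the paper takes, and your use of Lemma \ref{thm4.2}, Theorem \ref{thm1.5}/Corollary \ref{thm3.2} and the dichotomy \eqref{(4.1)} is on target. However, the decisive step, which you yourself flag as ``the main obstacle,'' is left unproved, and it is a genuine gap rather than a routine detail. Your control of the second-coordinate gaps rests on the assertion that an excessively large gap in $\tau_2$ ``should produce incompatible constraints'' when the intervening points are fed back into \eqref{(4.1)}; you never exhibit the contradiction, and it is not clear how to extract one from the pairs you describe. Moreover, even granting bounded gap vectors, finiteness of the alphabet needs more than ``discreteness'' of the solution set on each line $L_n$: you must show that $T$ of a gap vector ranges over finitely many integers (which requires two-sided bounds on both coordinates) and that the gaps of $\tau_1(\Lambda)$ cannot accumulate, i.e.\ you need a uniform positive lower bound on $g_k(\tau_1(\Lambda))$, which your plan never addresses.

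The paper's mechanism for precisely this step is short and concrete, and it is the piece your plan lacks. If $g_k(\tau_1(\Lambda))<0.8$, then Lemma \ref{thm4.1} applied to the difference of consecutive points of $\Lambda$ forces $g_k(\tau_2(\Lambda))=\pm g_k(\tau_1(\Lambda))$; since then $T\bigl(g_k(\tau_1),\pm g_k(\tau_1)\bigr)$ equals $2(t_1-t_2)\,g_k(\tau_1)$ or $2(t_1+t_2+1)\,g_k(\tau_1)$ and must be a nonzero integer (the value $0$ is excluded because it would force $\sin\pi g_k(\tau_1)=0$), one gets the uniform lower bound $g_k(\tau_1(\Lambda))\geq\min\left\{\frac{1}{2|t_1-t_2|},\frac{1}{2|t_1+t_2+1|}\right\}$---this is exactly where the hypothesis $(t_1,t_2)\neq(-\frac12,-\frac12)$ enters. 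Combined with the upper bound $\pi^2+2$ from Corollary \ref{thm3.2} for both coordinates, the gap vectors lie in a compact set, $T$ takes finitely many integer values, and for each such value the gap solves a fixed non-constant analytic equation on a compact interval, hence takes finitely many values. I suggest replacing your density-counting heuristic with this argument (or else supplying the missing contradiction and the lower bound explicitly); as written, the proposal does not yet establish finite local complexity, and everything downstream depends on it.
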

\begin{proof}
	With loss of generality, we assume $t_2\neq-\frac12$. 
	From Lemma \ref{thm4.2}, we know that $\tau_1(\Lambda)$ is a tight frame-spectrum of $\mathcal{L}_{[0,1]}$. 
	Write $$
	\Lambda=\{(\lambda_k,\lambda_k'):k\in\Bbb Z\}
	$$
	with $\cdots<\lambda_{-1}<\lambda_0<\lambda_1<\cdots$. 
	Recall that $g_k(\tau_1(\Lambda))=\lambda_k-\lambda_{k-1}$ and $g_k(\tau_2(\Lambda))=\lambda_k'-\lambda_{k-1}'$. 
	Due to Lemma \ref{thm4.2} and Corollary \ref{thm3.2}, one has 
	$g_{k}(\tau_j(\Lambda))\leq \pi^2+2$ for $j=1,2$. 
	If $g_{k}(\tau_j(\Lambda))<0.8$, then, by Lemma \ref{thm4.1}, $$
	g_{k}(\tau_1(\Lambda))\pm g_{k}(\tau_2(\Lambda))=0. 
	$$
	As \eqref{(4.1)} and  $$(g_{k}(\tau_1(\Lambda)),g_{k}(\tau_2(\Lambda)))\in\mathcal{Z}(\widehat{\rho_{t_1,t_2}}), $$
	one has $T(g_{k}(\tau_1(\Lambda)),g_{k}(\tau_2(\Lambda)))\in\Bbb Z$, which implies that 
	\begin{align*}
		|g_{k}(\tau_j(\Lambda))| \geq \min\left\{\frac{1}{2|t_1-t_2|}, \frac{1}{2|t_1+t_2+1|}\right\},\;\;\forall\;\;j=1,2.
	\end{align*}
	
	So we can find a constant number $C>0$ such that $C^{-1}\leq g_k(\tau_j(\Lambda))\leq C$ for all $k\in\Bbb Z$ and $j=1,2$. 
	Then $T(g_k(\tau_1(\Lambda)),g_k(\tau_2(\Lambda)))$ is also bounded, and the choice of $T(g_k(\tau_1(\Lambda)),g_k(\tau_2(\Lambda)))$ is finitely many, as $T(g_k(\tau_1(\Lambda)),g_k(\tau_2(\Lambda)))$ is an integer.  Write $T(g_k(\tau_1(\Lambda)),g_k(\tau_2(\Lambda)))=s\neq0$, the $g_k(\tau_1(\Lambda))$ must satisfy 
	the equation $$
	\frac{\sin\pi x}{\pi x}\pm \frac{\sin\pi(\frac{2t_1+1}{2t_2+1}x-\frac{s}{2t_2+1})}{\pi(\frac{2t_1+1}{2t_2+1}x-\frac{s}{2t'+1})}=0.
	$$
	Consequently, the non-constant analytic function on the right-hand side vanishes at points $g_k(\tau_1(\Lambda)) \in [C^{-1}, C]$, which implies that $g_k(\tau_1(\Lambda))$ can only take finitely many possible values. 
	Hence $\tau_1(\Lambda)$ is a set of finite local complexity. 
	Then, similar to the arguments of Lemma 5.4 of \cite{KW25}, $\tau_1(\Lambda_1)$ is a periodic set with a period in $\Bbb Z$. 
\end{proof}

\begin{prop}\label{thm4.5}
		Let $\rho_{t_1,t_2}$ be a spectral measure with a spectrum $\Lambda$. 
 Then $t_1-t_2\in\Bbb Z\setminus\{0\}$ or $t_1+t_2\in\Bbb Z\setminus\{-1\}$. 
\end{prop}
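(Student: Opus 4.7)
The plan is a proof by contradiction: suppose $\rho_{t_1,t_2}$ is spectral with spectrum $\Lambda$, yet neither $t_1-t_2 \in \mathbb{Z}\setminus\{0\}$ nor $t_1+t_2 \in \mathbb{Z}\setminus\{-1\}$ holds. First I would dispose of the boundary case $(t_1,t_2) = (-\tfrac12,-\tfrac12)$: Proposition~\ref{thm4.3} asserts that no such $\Lambda$ can exist, giving an immediate contradiction. For every other choice of $(t_1,t_2)$, Lemma~\ref{thm4.4} applies and produces an integer period $K \in \mathbb{Z}\setminus\{0\}$ with $\tau_1(\Lambda) = \tau_1(\Lambda) + K$; the analogous statement for $\tau_2(\Lambda)$ follows by symmetry.

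Next I would exhibit a small gap in $\tau_1(\Lambda)$. By Lemma~\ref{thm4.2}, $\tau_1(\Lambda)$ is a tight frame-spectrum of $\mathcal{L}_{[0,1]}$ with frame bound $A=2$, and $|\widehat{\mathcal{L}_{[0,1]}}(\xi)| \le 1/(\pi|\xi|)$. Since $A > \tfrac32$, Remark~\ref{thm1.2} yields $g_{\min}(\tau_1(\Lambda))^2 \le 1/3$, so some gap $g = \lambda_{k+1}-\lambda_k < 0.8$ must appear. Writing $\Lambda$ as the graph of a bijection $\phi : \tau_1(\Lambda) \to \tau_2(\Lambda)$ (justified by Lemma~\ref{thm4.1} and the injectivity of $\tau_j$), the difference $(g, \phi(\lambda_{k+1})-\phi(\lambda_k))$ lies in $\mathcal{Z}(\widehat{\rho_{t_1,t_2}})$, and Lemma~\ref{thm4.1} forces $\phi(\lambda_{k+1})-\phi(\lambda_k) = \pm g$. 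Condition~\eqref{(4.1)} then requires either $2g(t_1-t_2) \in 2\mathbb{Z}+1$ (``$+$'' type) or $2g(t_1+t_2+1) \in 2\mathbb{Z}+1$ (``$-$'' type), immediately forcing $t_1 \mp t_2$ to be rational.

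The crux is to upgrade these rational constraints to the integer ones claimed. I would work within a single period $[0,K)$: label the consecutive gaps as $g_1,\dots,g_s$ with $\sum g_k = K \in \mathbb{Z}$ and the corresponding $\phi$-increments $h_k$ with $\sum h_k = \phi(\lambda_0+K)-\phi(\lambda_0) =: M \in \mathbb{Z}\setminus\{0\}$. Summing the relation $T(g_k,h_k) \in \mathbb{Z}$ across all gaps for which the second case of~\eqref{(4.1)} applies produces the integer identity $K(2t_1+1) - M(2t_2+1) \in \mathbb{Z}$, i.e.\ $2(Kt_1 - Mt_2) \in \mathbb{Z}$. Combining this with the individual ``$\pm$'' constraints from each small gap and with the finite local complexity (bounding the number of distinct $g_k$'s, as established in the proof of Lemma~\ref{thm4.4}), I expect to conclude that $t_1 - t_2 \in \mathbb{Z}\setminus\{0\}$ or $t_1 + t_2 + 1 \in \mathbb{Z}\setminus\{0\}$, contradicting the standing assumption. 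The excluded value $t_1+t_2 = -1$ is ruled out at the end because, after a reflection of the second axis, it reduces to the Plus-space geometry handled by Proposition~\ref{thm4.3}.

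The main obstacle I anticipate is the \emph{mixed case} in which small gaps of both ``$+$'' and ``$-$'' types occur simultaneously: both $t_1 - t_2$ and $t_1 + t_2 + 1$ would then be rational but neither obviously integer. To handle this I would extract a second algebraic relation from the finite set of distinct small-gap values (their mutual ratios are highly constrained rationals, since each one scales either $t_1-t_2$ or $t_1+t_2+1$ to a half-odd integer), and feed these into the period identities $\sum g_k = K$ and $\sum h_k = M$. The resulting overdetermined linear system on the gap lengths should be solvable only when one of $t_1 \pm t_2$ is actually integer-valued, thereby completing the contradiction.
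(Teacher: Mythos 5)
Your opening moves match the paper's: dispose of $(-\tfrac12,-\tfrac12)$ via Proposition~\ref{thm4.3}, invoke Lemma~\ref{thm4.4} for periodicity, use Lemma~\ref{thm4.2} together with Theorem~\ref{thm1.1} (or Remark~\ref{thm1.2}) to produce a gap $\lambda_1\in(0,0.8)$, and apply Lemma~\ref{thm4.1} to force $\lambda_2=\pm\lambda_1$ with $T(\lambda_1,\lambda_2)$ an odd integer. But the step you flag as the crux is exactly where the proposal breaks down, in two ways. First, the claim that $2g(t_1-t_2)\in 2\mathbb{Z}+1$ ``immediately forces $t_1-t_2$ rational'' is false: $g$ is not known to be rational (take $g=\tfrac{1}{2\sqrt2}$, $t_1-t_2=\sqrt2$), so no rationality is extracted at this stage. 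Second, the period-summation identity $\sum_k T(g_k,h_k)=T(K,M)\in\mathbb{Z}$ does not follow from summing only over the gaps in the second case of \eqref{(4.1)}: gaps with $(g_k,h_k)\in(\mathbb{Z}\setminus\{0\})^2$ contribute $T(g_k,h_k)=2(g_kt_1-h_kt_2)+(g_k-h_k)$, which need not be an integer, so the sum does not telescope to an integer relation. The remaining ``overdetermined linear system'' for the mixed case is stated only as an expectation, and the final remark that $t_1+t_2=-1$ reduces by reflection to the Plus space is also inaccurate (it reduces to the symmetric measure $\rho_{t_1}$, which is the Plus space only when $t_1=-\tfrac12$; the paper instead excludes $-1$ directly from the parity condition \eqref{(4.3)}, since $T(\lambda_1,-\lambda_1)=2\lambda_1(t_1+t_2+1)$ odd forces $t_1+t_2+1\neq0$).

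The paper's actual route to integrality is structural rather than arithmetic-over-a-period, and your proposal is missing both of its key ingredients. Using periodicity to produce infinitely many integer points $(n,m)\in\Lambda$, the relation $|n-\lambda_1|=|m-\lambda_2|$ (from \eqref{(4.4)}, valid since $|\lambda_1|=|\lambda_2|$) forces $n=m$ (resp.\ $n=-m$) when $\lambda_1\neq\tfrac12$, and a limit along a sequence $(n_k,n_k)$ in \eqref{(4.5)} upgrades this to the rigidity statement $\Lambda\subset\{(x,x)\}$ (resp.\ $\{(x,-x)\}$), hence $\tau_1(\Lambda)\subset\mathbb{Z}\cup(\lambda_1+\mathbb{Z})$. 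The second ingredient is a Parseval computation with the tight-frame identity showing $1\in\tau_1(\Lambda)$; only then does the clean additive identity
$2(t_1-t_2)=T(1,1)=T(\lambda_1,\lambda_2)+T(1-\lambda_1,1-\lambda_2)\in2\mathbb{Z}$
deliver integrality, with \eqref{(4.3)} supplying $t_1\neq t_2$ (and analogously $t_1+t_2\neq-1$ in the anti-diagonal case). Without the diagonal rigidity and the identification of the lattice point $1$ in $\tau_1(\Lambda)$, your plan does not close, so the proposal has a genuine gap at its central step.
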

\begin{proof}
	From Proposition \ref{thm4.3}, we assume that $(t_1,t_2)\neq(-\frac12,-\frac12)$.
It follows from Lemma \ref{thm4.2} and Theorem \ref{thm1.1} that 
$g_{\min}(\tau_1(\Lambda))\leq \frac{\sqrt{2}}{2}$. 
By Lemma \ref{thm4.1} and the definition of $g_{\min}(\tau_1(\Lambda))$, with loss of generality, we assume $\mathbf{0},(\lambda_1,\lambda_2)\in\Lambda$ with $\lambda_1\in(0,0.8)$,  $\lambda_1\pm\lambda_2=0$ and 
\begin{align}\label{(4.3)}
	T(\lambda_1,\lambda_2)\in\Bbb Z\setminus2\Bbb Z.
\end{align} 
Lemma \ref{thm4.4} and $\mathbf{0}\in\Lambda$ allow us to assume that there exists $(n,m)\in\Lambda\setminus\{\mathbf{0}\}$. 
$$
	(-1)^{T(n-\lambda_1,m-\lambda_2)}\frac{\sin\pi(n-\lambda_1)}{n-\lambda_1}+	\frac{\sin\pi(m-\lambda_2)}{m-\lambda_2}=0.
$$
For each $(n,m)\in\Lambda\cap\Bbb Z^2$, by $(n-\lambda_1,m-\lambda_2)\in\mathcal{Z}(\widehat{\rho})$ and $|\lambda_1|=|\lambda_2|$, then 
\begin{align}\label{(4.4)}
|n-\lambda_1|=|m-\lambda_2|.
\end{align}
For any $(\lambda_1',\lambda_2')\in\Lambda\setminus\Bbb Z^2$, by $(\lambda_1',\lambda_2')-(n,m)\in\mathcal{Z}(\widehat{\rho_{t_1,t_2}})$, then 
\begin{align}\label{(4.5)}
\frac{\sin(\lambda_1'-n)\pi}{(\lambda_1'-n)\pi}\pm \frac{\sin(\lambda_2'-m)\pi}{(\lambda_2'-m)\pi}=0.
\end{align}
We prove the proposition by dividing two cases: $\lambda_1=\lambda_2$ and $\lambda_1=-\lambda_2$.

Suppose that $\lambda_1=\lambda_2$. 
If $\lambda_1=\frac12$, by \eqref{(4.3)}, one has 
$$
T(\lambda_1,\lambda_2)=t_1-t_2\in\Bbb Z\setminus2\Bbb Z.
$$
Now, we assume $\lambda_1\neq\frac12$, then \eqref{(4.4)} yields that $n=m$. 
Noting the choice of $(n,n)$ are infinitely many, denote as $\{(n_k,n_k)\}\subset\Lambda$ such that the symbols in \eqref{(4.5)} maintain uniform interpretation, then \eqref{(4.5)} becomes  
$$
\frac{\lambda_2'-n_k}{\lambda_1'-n_k}\sin\lambda_1'\pi\pm\sin\lambda_2'\pi=\lim_{k\rightarrow\infty}\frac{\lambda_2'-n_k}{\lambda_1'-n_k}\sin\lambda_1'\pi\pm\sin\lambda_2'\pi=\sin\lambda_1'\pi\pm\sin\lambda_2'\pi=0,
$$
which implies that $\lambda_1'=\lambda_2'$. 
This proves $\Lambda\subset\{(x,x):x\in\Bbb R\}$. 
This means $T(\lambda_1',\lambda_2')\in2\Bbb Z+1$ when $(\lambda_1',\lambda_2')\in\Lambda-\Lambda$ with $\lambda_1'\not\in\Bbb Z$. 
Then $\tau_1(\Lambda)\subset\Bbb Z\cup(\lambda_1+\Bbb Z)$. 
If $1\not\in\tau_1(\Lambda)$, by Lemma \ref{thm4.2}, then 
\begin{align*}
	2=	2\int_0^1|e^{-2\pi i x}|^2dx&=\sum_{\lambda\in(\Bbb Z\cup(\Bbb Z+\lambda_1))\cap\tau_1(\Lambda)}|\widehat{\mathcal{L}}_{[0,1]}(\lambda+1)|^2
	\\
	&=\sum_{\lambda\in(\Bbb Z+\lambda_1)\cap\tau_1(\Lambda)}|\widehat{\mathcal{L}}_{[0,1]}(\lambda+1)|^2\leq 1,
\end{align*}
a contradiction, as $\Bbb Z+\lambda_1$ is a spectrum of $\mathcal{L}_{[0,1]}$. 
So $1\in\tau_1(\Lambda)$, then 
$$
2(t_1-t_2)=T(1,1)=T(\lambda_1,\lambda_2)+T(1-\lambda_1,1-\lambda_2)\in2\Bbb Z,
$$
i.e., $t_1-t_2\in\Bbb Z$. 
By \eqref{(4.3)}, we know that $t_1\neq t_2$, which proves the case. 

Now, we consider the case $\lambda_1=-\lambda_2$. 
If $\lambda_1=\frac12$, by \eqref{(4.3)}, one has 
$$
T(\lambda_1,\lambda_2)=t_1+t_2+1
\in\Bbb Z\setminus2\Bbb Z.
$$
Now, we assume $\lambda_1\neq\frac12$, then \eqref{(4.4)} yields that $n=-m$. 
Similar to the arguments of case $\lambda_1=\lambda_2$, we can get $\Lambda\subset\{(x,-x):x\in\Bbb R\}$,  $\tau_1(\Lambda)\subset\Bbb Z\cup(\lambda_1+\Bbb Z)$ and  $1\in\tau_1(\Lambda)$. 
Hence
$$
2(t_1+t_2+1)=T(1,1)=T(\lambda_1,\lambda_2)+T(1-\lambda_1,1-\lambda_2)\in2\Bbb Z,
$$
i.e., $t_1+t_2+1\in\Bbb Z$. 
Finally, using \eqref{(4.3)} again, we reduce  that $t_1+t_2\neq-1$, which proves the proposition. 
\end{proof}
The following theorem is a fundamental criterion. 
\begin{thm}[\cite{JP98}]\label{thm4.6}
	Let $\mu$ be a Borel probability measure on $\Bbb R^n$. 
	Then a countable set $\Lambda\subset \Bbb R^n$ is a spectrum of $L^2 (\mu)$ if and only if $\sum_{\lambda\in\Lambda}|\widehat{\mu}(\xi+\lambda)|^2=1$ for all $\xi\in\mathbb{R}^n$.
\end{thm}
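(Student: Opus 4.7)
The plan rests on two elementary identities that hold for every $\xi,\lambda\in\mathbb{R}^n$: since $\mu$ is a probability measure,
\[
\|e_{-\xi}\|_{L^2(\mu)}^2 = 1, \qquad \langle e_{-\xi}, e_\lambda\rangle_{L^2(\mu)} = \int e^{2\pi i(\xi+\lambda)\cdot x}d\mu(x)=\overline{\widehat{\mu}(\xi+\lambda)}.
\]
Thus $\sum_{\lambda\in\Lambda}|\widehat{\mu}(\xi+\lambda)|^2$ is exactly $\sum_{\lambda\in\Lambda}|\langle e_{-\xi},e_\lambda\rangle|^2$, and the identity in the theorem is nothing but Parseval's equality applied to the particular function $e_{-\xi}$.

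The necessity direction is immediate: if $\Lambda$ is a spectrum, then $E(\Lambda)$ is an orthonormal basis of $L^2(\mu)$, and Parseval's identity applied to $e_{-\xi}\in L^2(\mu)$ together with the two identities above yields $\sum_{\lambda\in\Lambda}|\widehat{\mu}(\xi+\lambda)|^2=1$ for every $\xi\in\mathbb{R}^n$.

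For the sufficiency direction, I would first deduce orthonormality of $E(\Lambda)$ from the hypothesis. Fix $\lambda_0\in\Lambda$ and substitute $\xi=-\lambda_0$: the $\lambda=\lambda_0$ summand equals $|\widehat{\mu}(0)|^2=1$, forcing every remaining term $|\widehat{\mu}(\lambda-\lambda_0)|^2$ to vanish. Because $\langle e_\lambda,e_{\lambda_0}\rangle_{L^2(\mu)}=\widehat{\mu}(\lambda-\lambda_0)$, this gives mutual orthogonality, and normalization follows from $\widehat{\mu}(0)=1$.

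The main step is completeness. Consider
\[
S := \Big\{f\in L^2(\mu) : \sum_{\lambda\in\Lambda}|\langle f,e_\lambda\rangle|^2 = \|f\|^2\Big\},
\]
which coincides with the closed linear span of $E(\Lambda)$ and is in particular a closed subspace of $L^2(\mu)$. The hypothesis says exactly that $e_{-\xi}\in S$ for every $\xi\in\mathbb{R}^n$, so $S$ contains the closed linear span of all pure exponentials $\{e_{-\xi}:\xi\in\mathbb{R}^n\}$. It therefore suffices to show that this closed span equals $L^2(\mu)$, which I would do by duality: if $f\in L^2(\mu)$ were orthogonal to every $e_{-\xi}$, then the finite complex Borel measure $f\,d\mu$ (finite since $f\in L^1(\mu)$ by Cauchy--Schwarz against the constant $1\in L^2(\mu)$) would have identically vanishing Fourier transform, hence vanish, forcing $f=0$ $\mu$-a.e. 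Consequently $S=L^2(\mu)$, Parseval's identity holds for every $f\in L^2(\mu)$, and $E(\Lambda)$ is an orthonormal basis of $L^2(\mu)$. The only real subtlety is the density step just described, and its key ingredient is the injectivity of the Fourier transform on finite Borel measures.
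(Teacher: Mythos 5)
Your argument is correct. The paper itself states this criterion without proof, citing Jorgensen--Pedersen \cite{JP98}, so there is no in-paper argument to compare against; your proof is the standard one. Both directions are sound: necessity is Parseval applied to $e^{2\pi i\xi\cdot x}$, orthonormality follows from the choice $\xi=-\lambda_0$ together with $\widehat{\mu}(0)=1$, and completeness follows because the Parseval-equality set of an orthonormal system is exactly the closed span, combined with the density of exponentials in $L^2(\mu)$ via the injectivity of the Fourier--Stieltjes transform on finite complex Borel measures (note that $f\in L^1(\mu)$ since $\mu$ is finite, so $f\,d\mu$ is indeed a finite measure).
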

Before the proof of Theorem \ref{thm1.7}, we need the following equality. 
\begin{lem}\label{thm4.7}
	Let $\xi_1,\xi_2\in\Bbb R$. 
	Then
	$$
	\sum_{n\in\Bbb Z}\frac{\sin\pi(\xi_1+n)}{\pi(\xi_1+n)}\frac{\sin\pi(\xi_2+n)}{\pi(\xi_2+n)}=\frac{\sin(\xi_2-\xi_1)\pi}{(\xi_2-\xi_1)\pi},  
	$$
where $\frac{\sin(\xi_2-\xi_1)\pi}{(\xi_2-\xi_1)\pi}=1$ when $\xi_1=\xi_2$. 
\end{lem}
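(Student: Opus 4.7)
The plan is to recognize the left-hand side as the Parseval pairing of the Fourier coefficients of two simple functions on the interval $[-\tfrac12,\tfrac12]$. Specifically, I would work in the Hilbert space $L^2([-\tfrac12,\tfrac12])$ equipped with its standard orthonormal basis $\{e^{-2\pi i n x}:n\in\mathbb Z\}$, and take $f_j(x)=e^{2\pi i\xi_j x}$ for $j=1,2$.

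First, I would compute the Fourier coefficients of each $f_j$ with respect to this basis. A direct integration gives
$$
c_n^{(j)} \;=\; \int_{-1/2}^{1/2} e^{2\pi i\xi_j x}\,\overline{e^{-2\pi i n x}}\,dx \;=\; \int_{-1/2}^{1/2} e^{2\pi i(\xi_j+n)x}\,dx \;=\; \frac{\sin\pi(\xi_j+n)}{\pi(\xi_j+n)},
$$
and these values are real. Hence the sum on the left-hand side of the lemma is exactly $\sum_{n\in\mathbb Z}c_n^{(1)}\overline{c_n^{(2)}}$.

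Next, I would apply Parseval's identity, which equates the $\ell^2$-inner product of the coefficient sequences with the $L^2$-inner product of the functions:
$$
\sum_{n\in\mathbb Z} c_n^{(1)}\overline{c_n^{(2)}} \;=\; \int_{-1/2}^{1/2} e^{2\pi i\xi_1 x}\,\overline{e^{2\pi i\xi_2 x}}\,dx \;=\; \int_{-1/2}^{1/2} e^{2\pi i(\xi_1-\xi_2)x}\,dx \;=\; \frac{\sin\pi(\xi_1-\xi_2)}{\pi(\xi_1-\xi_2)}.
$$
Since the sinc function is even, this equals $\frac{\sin\pi(\xi_2-\xi_1)}{\pi(\xi_2-\xi_1)}$, yielding the identity. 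The degenerate case $\xi_1=\xi_2$ follows from $\|f_1\|_{L^2([-1/2,1/2])}^2=1$, which is consistent with the stated convention that the right-hand side equals $1$ there.

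There is no real obstacle in this argument; it is a textbook application of Parseval once the sinc values are identified as Fourier coefficients. The only care required is in the choice of sign conventions for the exponentials, so that $\xi_j+n$ (rather than $\xi_j-n$) appears in the numerator; this is why I pair $f_j(x)=e^{2\pi i\xi_j x}$ with the conjugate basis $\{e^{-2\pi i n x}\}$.
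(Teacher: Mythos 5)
Your proof is correct. The paper's own proof is a one-line appeal to the Poisson summation formula applied to the product $x\mapsto \frac{\sin\pi(\xi_1+x)}{\pi(\xi_1+x)}\cdot\frac{\sin\pi(\xi_2+x)}{\pi(\xi_2+x)}$: the Fourier transform of this product is the convolution of two modulated indicator functions of $[-\frac12,\frac12]$, hence is supported in $[-1,1]$ and vanishes at $\pm1$, so only the $k=0$ term survives in the dual sum and equals $\frac{\sin\pi(\xi_1-\xi_2)}{\pi(\xi_1-\xi_2)}$. Your argument is the Fourier-series dual of this: you read the sinc values as the Fourier coefficients of $e^{2\pi i\xi_j x}$ on $[-\frac12,\frac12]$ and invoke Parseval, so that the sum collapses directly to $\langle f_1,f_2\rangle_{L^2([-1/2,1/2])}$. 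The two routes rest on the same duality, but yours is slightly easier to make fully rigorous: Parseval in $L^2([-\frac12,\frac12])$ applies with no further hypotheses, whereas Poisson summation for a function decaying only like $|x|^{-2}$ requires a (routine but nonempty) justification. Your coefficient computation is correct, the coefficients are indeed real so the conjugation is harmless, the evenness of $\frac{\sin \pi t}{\pi t}$ handles the sign discrepancy between $\xi_1-\xi_2$ and $\xi_2-\xi_1$, and the degenerate case $\xi_1=\xi_2$ is correctly covered by $\|f_1\|^2=1$.
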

\begin{proof}
	The lemma follows directly from the Poisson summation formula to the function 
	$$
	\frac{\sin\pi(\xi_1+x)}{\pi(\xi_1+x)}\frac{\sin\pi(\xi_2+x)}{\pi(\xi_2+x)}. 
	$$
\end{proof}

At the end of this section, we prove Theorem \ref{thm1.7} by showing the following proposition. 

\begin{prop}
	Let $\rho_{t_1,t_2}$ be given as \eqref{(1.2)}, and let $$
	\Lambda:=\begin{cases}
\{(n,n),(n+\frac{1}{2(t_1-t_2)},n+\frac{1}{2(t_1-t_2)}):n\in\Bbb Z\}, & t_1-t_2\in\Bbb Z\setminus\{0\};
\\
\{(n,-n),(n+\frac{1}{2(t_1+t_2+1)},-n-\frac{1}{2(t_1+t_2+1)}):n\in\Bbb Z\}, & t_1+t_2\in\Bbb Z\setminus\{-1\}.
	\end{cases}
	$$
	If $t_1-t_2\in\Bbb Z\setminus\{0\}$ or $t_1+t_2\in\Bbb Z\setminus\{-1\}$, then $\Lambda$ is a spectrum of $\rho_{t_1,t_2}$. 
\end{prop}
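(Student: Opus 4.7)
The plan is to apply the Jorgensen--Pedersen criterion (Theorem~\ref{thm4.6}): it suffices to verify
\[
Q(\xi):=\sum_{\lambda\in\Lambda}|\widehat{\rho_{t_1,t_2}}(\xi+\lambda)|^2=1,\qquad\forall\,\xi=(\xi_1,\xi_2)\in\Bbb R^2.
\]
The first step is a direct Fourier computation: from the definition \eqref{(1.2)} one gets
\[
\widehat{\rho_{t_1,t_2}}(\eta_1,\eta_2)=\tfrac12\Bigl(e^{-\pi i\eta_1(2t_1+1)}\tfrac{\sin\pi\eta_1}{\pi\eta_1}+e^{-\pi i\eta_2(2t_2+1)}\tfrac{\sin\pi\eta_2}{\pi\eta_2}\Bigr),
\]
so $|\widehat{\rho_{t_1,t_2}}(\eta_1,\eta_2)|^2$ expands into two diagonal pieces $\tfrac14\mathrm{sinc}^2(\pi\eta_j)$ and one cross piece $\tfrac12\cos\bigl(\pi T(\eta_1,\eta_2)\bigr)\mathrm{sinc}(\pi\eta_1)\mathrm{sinc}(\pi\eta_2)$, where $T(\eta_1,\eta_2)=\eta_1(2t_1+1)-\eta_2(2t_2+1)$ as in \eqref{(4.1)}.

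Next I would treat the two cases in parallel. For $t_1-t_2=K\in\Bbb Z\setminus\{0\}$ the spectrum $\Lambda$ is the union of the diagonal $\{(n,n)\}$ and its $(\tfrac{1}{2K},\tfrac{1}{2K})$-shift, so for both families $\lambda=(n,n)+c\,(1,1)$ the sinc factors at $\xi+\lambda$ involve arguments $\xi_1+n+c$ and $\xi_2+n+c$ whose difference stays equal to $\xi_2-\xi_1$. Applying Lemma~\ref{thm4.7} twice (once with coincident arguments, giving the Parseval-type identity $\sum_n\mathrm{sinc}^2(\pi(\xi_j+n+c))=1$, and once with distinct arguments, giving $\sum_n\mathrm{sinc}(\pi(\xi_1+n+c))\mathrm{sinc}(\pi(\xi_2+n+c))=\tfrac{\sin\pi(\xi_2-\xi_1)}{\pi(\xi_2-\xi_1)}$), each family contributes $\tfrac14\bigl(2+2\cos(\pi T(\xi_1+c,\xi_2+c))\tfrac{\sin\pi(\xi_2-\xi_1)}{\pi(\xi_2-\xi_1)}\bigr)$, using that $n(t_1-t_2)=nK\in\Bbb Z$ kills the $n$-dependence in the phase. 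The cosine factor for $c=0$ is $\cos(\pi T(\xi_1,\xi_2))$, while for $c=\tfrac1{2K}$ it equals $\cos(\pi T(\xi_1,\xi_2)+\pi\cdot\tfrac{1}{2K}\cdot K)=\cos(\pi T(\xi_1,\xi_2)+\tfrac\pi2\cdot 1)$; actually the crucial observation is $\tfrac{1}{2K}(t_1-t_2)=\tfrac12$, so adding $\pi$ inside the cosine flips its sign. Thus the two cross terms cancel and $Q(\xi)=\tfrac14\cdot 2+\tfrac14\cdot 2=1$.

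For the second case $t_1+t_2+1=K'\in\Bbb Z\setminus\{0\}$, the spectrum sits on the anti-diagonal $\{(n,-n)\}$ and its $(\tfrac{1}{2K'},-\tfrac{1}{2K'})$-shift. The computation is entirely parallel after noting that Lemma~\ref{thm4.7} with the substitution $n\mapsto-n$ gives
\[
\sum_{n\in\Bbb Z}\tfrac{\sin\pi(\xi_1+n)}{\pi(\xi_1+n)}\tfrac{\sin\pi(\xi_2-n)}{\pi(\xi_2-n)}=\tfrac{\sin\pi(\xi_1+\xi_2)}{\pi(\xi_1+\xi_2)}.
\]
The phase accumulates $n(t_1+t_2+1)=nK'\in\Bbb Z$ on the unshifted diagonal, and the $\tfrac{1}{2K'}$-shift contributes an extra $\pi\cdot\tfrac{1}{2K'}\cdot K'=\tfrac\pi2$ (doubled to $\pi$ inside the cosine) that again produces a sign flip, so the two cross contributions cancel.

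There is no deep obstacle: the only thing to watch is the bookkeeping of phases, making sure that the identity $\tfrac{1}{2K}(t_1-t_2)=\tfrac12$ (respectively $\tfrac{1}{2K'}(t_1+t_2+1)=\tfrac12$) is what produces the decisive sign reversal between the two cosets making up $\Lambda$. Once this cancellation is confirmed, the diagonal contributions sum by Lemma~\ref{thm4.7} to exactly $1$, and Theorem~\ref{thm4.6} concludes that $\Lambda$ is a spectrum.
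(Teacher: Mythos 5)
Your proposal is correct and follows essentially the same route as the paper: verify the Jorgensen--Pedersen criterion by splitting $|\widehat{\rho_{t_1,t_2}}|^2$ into two sinc-squared diagonal pieces (each coset of $\Bbb Z$ contributing a Parseval identity) plus a cross term with phase $\cos(\pi T)$, then kill the cross term via Lemma~\ref{thm4.7} together with the parity observation that $T$ is even on the integer coset and odd on the shifted coset. The only blemish is the momentary miscount ``$+\tfrac{\pi}{2}\cdot 1$'' before you correctly settle on the shift changing $T$ by $2\cdot\tfrac{1}{2K}\cdot K=1$, i.e.\ adding $\pi$ inside the cosine; the final argument is sound.
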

\begin{proof}
	It follows that 
	\begin{align*}
\sum_{\lambda\in\Lambda}|\widehat{\rho_{t_1,t_2}}(\xi+\lambda)|^2&=	\frac14\sum_{(\lambda_1,\lambda_2)\in\Lambda}\Big|
		\int_{t_1}^{t_1+1}e^{-2\pi i(\xi_1+\lambda_1)x}dx+		\int_{t_2}^{t_2+1}e^{-2\pi i(\xi_2+\lambda_2)x}dx\Big|^2
		\\
		&=\frac14\sum_{(\lambda_1,\lambda_2)\in\Lambda}(|\widehat{\mathcal{L}}_{[t_1,t_1+1]}(\xi_1+\lambda_1)|^2+|\widehat{\mathcal{L}}_{[t_2,t_2+1]}(\xi_2+\lambda_2)|^2+C),
	\end{align*}
where 
	\begin{align*}
		C:=&\frac14\sum_{(\lambda_1,\lambda_2)\in\Lambda}2\cos\pi(T\xi+T(\lambda_1,\lambda_2))\frac{\sin\pi(\xi_1+\lambda_1)}{\pi(\xi_1+\lambda_1)}\frac{\sin\pi(\xi_2+\lambda_2)}{\pi(\xi_2+\lambda_2)}.
	\end{align*}
	Since $T(\lambda)\in2\Bbb Z$ when $\lambda\in\Lambda\cap\Bbb Z^2$, and $T(\lambda)\in\Bbb Z\setminus2\Bbb Z$ when $\lambda\in\Lambda\setminus\Bbb Z^2$, we get that $\frac{2C}{\cos T\xi}$ can be written as  
		\begin{align*}
	\sum_{(\lambda_1,\lambda_2)\in\Lambda\cap\Bbb Z^2}\Big(\frac{\sin\pi(\xi_1+\lambda_1)}{\pi(\xi_1+\lambda_1)}\frac{\sin\pi(\xi_2+\lambda_2)}{\pi(\xi_2+\lambda_2)}-\frac{\sin\pi(\xi_1'+\lambda_1)}{\pi(\xi_1'+\lambda_1)}\frac{\sin\pi(\xi_2'+\lambda_2)}{\pi(\xi_2'+\lambda_2)}\Big), 
	\end{align*}
and $$\xi_i'=\xi_i+\begin{cases}
\frac{1}{2(t_1-t_2)}, & t_1-t_2\in\Bbb Z\setminus\{0\};
\\
(-1)^{i-1
}\frac{1}{2(t_1+t_2+1)}, & t_1+t_2\in\Bbb Z\setminus\{-1\}.
\end{cases}. $$
It follows from Lemma \ref{thm4.7} that 
$$
	\sum_{(\lambda_1,\lambda_2)\in\Lambda\cap\Bbb Z^2}\frac{\sin\pi(\xi_1+\lambda_1)}{\pi(\xi_1+\lambda_1)}\frac{\sin\pi(\xi_2+\lambda_2)}{\pi(\xi_2+\lambda_2)}=\begin{cases}
		\frac{\sin(\xi_2-\xi_1)\pi}{(\xi_2-\xi_1)\pi}, & t_1-t_2\in\Bbb Z\setminus\{0\};
		\\
		\frac{\sin(\xi_2+\xi_1)\pi}{(\xi_2+\xi_1)\pi}, & t_1+t_2\in\Bbb Z\setminus\{-1\}.
	\end{cases}
$$
Observing that substituting $(\xi_1,\xi_2)$ with $(\xi_1',\xi_2')$ in the above equation preserves its validity, we conclude that $C\equiv0$.
Therefore 
	\begin{align*}
	\sum_{\lambda\in\Lambda}|\widehat{\rho_{t_1,t_2}}(\xi+\lambda)|^2=\frac14\sum_{(\lambda_1,\lambda_2)\in\Lambda}(|\widehat{\mathcal{L}}_{[t_1,t_1+1]}(\xi_1+\lambda_1)|^2+|\widehat{\mathcal{L}}_{[t_2,t_2+1]}(\xi_2+\lambda_2)|^2)=1,
\end{align*}
	So, from Theorem \ref{thm4.6}, $\Lambda$ is a spectrum of $\rho_{t_1,t_2}$. 
	This proves the proposition. 
\end{proof}

\end{document}